\newtheorem{theorem}{Theorem}[section]
\newtheorem{corollary}{Corollary}
\newtheorem{lemma}[theorem]{Lemma}
\newtheorem{proposition}{Proposition}
\theoremstyle{definition}
\newtheorem{remark}{Remark}
\title[Product of statistical manifolds with a non-diagonal metric] 
      {Product of statistical manifolds with a non-diagonal metric}
\author[Rafik Nasri]{}
\subjclass{53B05, 53C15, 53C42, 53C50.}
 \keywords{conjugate; dual connection; product manifold; warped product; generalized warped products.}
  \email{rmag1math@yahoo.fr}
  \email{ddjebbouri20@gmail.com}
\begin{document}
\maketitle

\maketitle


\centerline{\scshape Djebbouri Djelloul and Rafik Nasri}
\medskip
{\footnotesize
 \centerline{Laboratory of Geometry, Analysis, Control and Applications}
   \centerline{Universit\'e de Sa\"{i}da}
   \centerline{BP138, En-Nasr, 20000 Sa\"ida, Algeria}
}

\bigskip

 \centerline{(Communicated by the associate editor name)}

\begin{abstract}
In this paper, we generalize the dualistic structures on warped product
manifolds to the dualistic structures on generalized warped product manifolds.
we develop an expression of curvature for the connection of the generalized
warped product in relation to those corresponding analogues of its base and fiber
and warping functions. we show that  the dualistic structures on the base $M_{_1}$
and the fiber $M_{_2}$ induces a dualistic structure on the generalized warped product
$M_1\times M_2$ and conversely, moreover, $(M_{_1}\times M_{_1},G_{_{f_{_1}f_{_2}}})$
or $(M_{_1}\times M_{_1},\tilde{g}_{_{f_{_1}f_{_2}}})$ is statistical manifold if and
only if $(M_{_1},g_{_1})$ and $(M_{_1},g_{_1})$ are. Finally, Some interesting consequences
are also given.
\end{abstract}


\section{Introduction}
The warped product provides a way to construct new pseudo-rieman
nian manifolds from the given ones, see \cite{B. O'Neill},\cite{bishop} and
\cite{Beem2}. This construction has useful applications in general
relativity, in the study of cosmological models and black holes. It
generalizes the direct product in the class of pseudo-Riemannian
manifolds and it is defined as follows. Let $(M_1,g_1)$ and
$(M_2,g_2)$ be two pseudo-Riemannian manifolds and let
$f_1:M_1\longrightarrow\mathbb{R}^*$ be a positive smooth function on
$M_1$, the warped product of $(M_1,g_1)$ and
$(M_2,g_2)$ is the product manifold $M_1\times M_2$ equipped
with the metric tensor $g_{f_1}:=\pi_1^*g_1
+(f\circ \pi_1)^2\pi_2^*g_2$, where $\pi_1$ and
$\pi_2$ are the projections of $M_1\times M_2$ onto $M_1$ and
$M_2$ respectively. The manifold $M_1$ is called the base of
$(M_1\times M_2,g_{f_1})$ and $M_2$ is called the fiber. The function $f_1$
is called the warping function.  \\
The doubly warped product is  construction in the class of pseudo-Riemannian
manifolds generalizing the warped product and the direct product, it is obtained by homothetically
distorting the geometry of each base $M_{_1}\times\{q\}$ and each fiber $\{p\}\times M_{_2}$ to get a new
"doubly warped" metric tensor on the product manifold and defined as follows.
For $i\in \{1,2\}$, let $M_i$ be a pseudo-Riemannian manifold equipped with metric
$g_i$, and $f_{_i}: M_i\rightarrow \mathbb{R}^*$ be a positive smooth function on $M_i$. The well-know notion of doubly warped product
manifold $M_{_1}\times _{_{f_1f_2}}M_{_2}$ is defined as the product manifold $M=M_{_1}\times M_{_2}$
equipped with pseudo-Riemannian metric which is denoted by $g_{_{f_1f_2}}$, given by
$$
g_{_{f_1f_2}}=(f_2\circ\pi_2)^2\pi_1^* g_{_{_1}}+(f_1\circ\pi_1)^2\pi_2^*g_{_{_2}}.
$$
The generalized warped product is defined as follows. let $c$ be an arbitrary real number and
let $g_i$, $(i=1,2)$ be a Riemannian metric tensors on $M_i$. Given a smooth positive function
$f_i$ on $M_i$, the generalized warped product of $(M_1,g_1)$ and $(M_2,g_2)$ is the product
manifold $M_1\times M_2$ equipped with the metric tensor $G_{f_1f_2}$ (see \cite{Nass2}),
explicitly, given by
$$
\begin{array}{rl}
 G_{_{f_1,f_2}}(X,Y)&=(f_2^v)^2g_{_{_1}}^{\pi_1}(d\pi_1(X),d\pi_1(Y))
 +(f_1^h)^2g_{_{_2}}^{\pi_2}(d\pi_2(X),d\pi_2(Y)) \\
 &\\
&+cf_1^hf_2^v\left(X(f_1^h)Y(f_2^v)+X(f_2^v)Y(f_1^h))\right).
\end{array}
$$
For all $X,Y\in\Gamma(TM_1\times M_2)$. When the warping functions $f_1=1$ or $f_2=1$ or $c=0$
we obtain a warped product or direct product. \\

Dualistic structures are closely related to statistical mathematics. They consist of pairs of affine
connections on statistical manifolds, compatible with a pseudo-Riemanniann metric \cite{Amari Dif-geom}.
 Their importance in statistical physics was underlined by many authors: \cite{Furuhata},\cite{Amari Nagaoka} etc.\\
 Let $M$ be a pseudo-Riemannian manifold equipped with a pseudo-Riemannian metric $g$ and let $\nabla$, $\nabla^{^{*}}$
 be the affine connections on $M$. We say that a pair of affine connections  $\nabla$ and $\nabla^{^{*}}$
 are compatible (or conjugate ) with respect to $g$ if
 \begin{equation}\label{dual}
    X(g(Y,Z))=g(\nabla_XY,Z)+g(Y,\nabla^{^{*}}_XZ) ~~~\text{for~all} ~X,Y,Z\in \Gamma(TM),
 \end{equation}
 where $\Gamma(TM)$ is the set of all tangent vector fields on $M$. Then the triplet $(g,\nabla,\nabla^{^{*}})$
 is called the dualistic structure on $M$. \\
 We note that the notion of "conjugate connection " has been attributed to A.P. Norden in affine differential
 geometry literarture (Simon, 2000) and was independently introduced by (Nagaoka and Amari, 1982) in information
 geometry, where it was called " dual connection" (Lauritzen, 1987).
 The triplet $(M, \nabla,g)$ is called a statistical manifold if it admits another torsion-free
 connection $\nabla^{^{*}}$ satisfying the equation (\ref{dual}). We call $\nabla$ and $\nabla^{^{*}}$ duals
 of each other with respect to $g$.\\

In the notions of terms on statistical manifolds, for a torsion-free affine
connection $\nabla$ and a pseudo-Riemannian metric $g$ on a manifold $M$, the
triple $(M, \nabla, g)$ is called a statistical manifold if $\nabla g$ is symmetric.
If the curvature tensor $R$ of $\nabla$ vanishes, $(M,\nabla,g)$ is said to be flat.\\

This paper extends the study of dualistic structures on warped product manifolds,
\cite{TOdjihounde}, to dualistic structures on generalized warped products in
pseudo-Riemannian manifolds. We develop an expression of curvature for the connection of
the generalized warped product in relation to those corresponding analogues of
its base and fiber and warping functions.\\
The paper is organized as follows. In section 2, we collect the basic material about
Levi-Civita connection, the notion of conjugate, horizontal and vertical lifts and the
 generalized warped products.\\
In section 3, we show that the projection of a dualistic structure defined on a generalized
warped product space $(M_1\times M_2, G_{f_1f_2})$ induces dualistic structures on the base
$(M_1,g_1)$ and the fiber $(M_2,g_2)$. Conversely, there exists a dualistic structure on the
generalized warped product space induced by its base and fiber.\\
In section 4,  we show that the projection of a dualistic structure defined on a generalized
warped product space $(M_1\times M_2, \tilde{g}_{f_1f_2})$ induces dualistic structures on the base
$(M_1,g_1)$ and the fiber $(M_2,g_2)$. Conversely, there exists a dualistic structure on the
generalized warped product space induced by its base and fiber and finally,
Some interesting consequences are also given.

\section{Preliminaries}
\subsection{Statistical manifolds}
We recall  some standard facts about Levi-Civita connections and the dual statistical manifold.
Many fundamental definitions and results about dualistic structure can
be found in Amari's monograph (\cite{Amari Dif-geom},\cite{Amari Nagaoka}).

Let $(M,g)$ be a pseudo-Riemannian manifold. The metric
$g$ defines the musical isomorphisms
$$
\begin{array}{cccc}
\sharp_g: & T^*M&  \rightarrow  & TM \\
& \alpha & \mapsto & \sharp_g(\alpha)
\end{array}
$$
such that $g(\sharp_g(\alpha),Y)=\alpha(Y)$, and its inverse $\flat_g$.
We can thus define the cometric $\widetilde{g}$ of the metric $g$ by :
\begin{equation}\label{musical}
\widetilde{g}(\alpha,\beta)=g(\sharp_g(\alpha),\sharp_g(\beta)).
\end{equation}
A fundamental theorem of pseudo-Riemannian geometry states that given a pseudo-Riemannian metric $g$ on the tangent
bundle $TM$, there is a unique connection (among the class of torsion-free connection) that "preserves" the
metric; as long as the following condition is satisfied:
\begin{equation}\label{compatible}
    X(g(Y,Z))=g(\hat{\nabla}_XY,Z)+g(Y,\hat{\nabla}_XZ) ~~~for ~X,Y,Z\in \Gamma(TM)
\end{equation}
Such a connection, denoted as $\hat\nabla$, is known as the Levi-Civita connection. Its component
forms, called Christoffel symbols, are determined by the components of pseudo-metric tensor as
("Christoffel symbols of the second Kink ")
$$
\hat{\Gamma}_{ij}^k=\sum_l\frac{1}{2}g^{kl}(\frac{\partial g_{il}}{\partial x^j}
+\frac{\partial g_{jl}}{\partial x^i}-\frac{\partial g_{ij}}{\partial x^l})
$$
and ("Christoffel symbols of the first Kink")
$$
\hat{\Gamma}_{ij,k}=\frac{1}{2}(\frac{\partial g_{ik}}{\partial x^j}
+\frac{\partial g_{jk}}{\partial x^i}-\frac{\partial g_{ij}}{\partial x^k}).
$$
The Levi-Civita connection is compatible with the pseudo metric, in the sense that it treats
tangent vectors of the shortest curves on a manifold as being parallel.\\

It turns out that one can define a kind of "Compatibility" relation more generally than expressed by
the (\ref{compatible}), by introducing the notion of "Conjugate" (denoted by *) between two affine
 connections.\\

Let $(M,g)$ be a pseudo-Riemannian manifold and let $\nabla$, $\nabla^{^{*}}$ be an affine connections
on $M$. A connection $\nabla^{^{*}}$ is said to be "conjugate" to $\nabla$ with respect to $g$ if
\begin{equation}
    X(g(Y,Z))=g(\nabla_XY,Z)+g(Y,\nabla^{^{*}}_XZ) ~~~for ~X,Y,Z\in \Gamma(TM)
 \end{equation}
Clearly,
$$
(\nabla^{^{*}})^{^{*}}=\nabla.
$$
Otherwise, $\hat{\nabla}$, which satisfies the (\ref{compatible}), is special in the sense that it is self-conjugate
$$
(\hat{\nabla})^{^{*}}=\hat{\nabla}.
$$
Because pseudo-metric tensor $g$ provides a one-to-one mapping between vectors in the tangent space
and co-vectors in the cotangent space, the equation (\ref{dual}) can also be seen as characterizing how co-vector
fields are to be parallel-transported in order to preserve their dual pairing $<.,.>$ with vector fields.\\
Writing out the equation \ref{dual} explicitly,
\begin{equation}
    \frac{\partial g_{ij}}{\partial x^k}=\Gamma_{ki,j}+\Gamma_{kj,i}^*,
\end{equation}
where
$$
\nabla^{^{*}}_{\partial_i}\partial_j=\sum_l\Gamma_{ij}^{*l}\partial_l
$$
so that
$$
\Gamma_{kj,i}^*=g(\nabla^{^{*}}_{\partial_j}\partial_k,\partial_i)=\sum_lg_{il}\Gamma_{kj}^{*l}.
$$
In the following, a manifold $M$ with a pseudo-metric $g$ and a pair of conjugate connections $\nabla, \nabla^{^{*}}$
with respect to $g$ is called a " pseudo-Riemannian manifold with dualistic structure " and denoted by $(M,g,\nabla,\nabla^{^{*}})$.\\
Obviously, $\nabla$ and $\nabla^{^{*}}$ (or equivalently, $\Gamma$ and $\Gamma^*$) satisfy the relation
$$
\hat{\nabla}=\frac{1}{2}(\nabla+ \nabla^{^{*}}) ~~~~(\text{or~ equivalently}, \hat{\Gamma}=\frac{1}{2}(\Gamma+\Gamma^*)).
$$
Thus an affine connection $\nabla$ on $(M,g)$ is metric if and only if $\nabla^{^{*}}=\nabla$ ( that it is self-conjugate).   \\
For a torsion-free affine connection $\nabla$ and a pseudo-Riemannian metric $g$ on a manifold $M$, the triplet
$(M, \nabla, g)$ is called a statistical manifold if $\nabla g$ is symmetric. If the curvature tensor $\mathcal{R}$
of $\nabla$ vanishes, $(M, \nabla, g)$ is said to be flat.\\
For a statistical manifold $(M, \nabla, g)$, the conjugate connection $\nabla^{^{*}}$ with respect to $g$ is torsion-free
and $\nabla^{^{*}} g$ symmetric. Then the triplet  $(M, \nabla^{^{*}}, g)$ is called the dual statistical manifold of  $(M, \nabla, g)$
and  $( \nabla, \nabla^{^{*}}, g)$ the dualistic structure on $M$. The curvature tensor of $\nabla$ vanishes if and
only if that of $\nabla^{^{*}}$ does and in such a case, $( \nabla, \nabla^{^{*}}, g)$
is called the dually flat structure \cite{Amari Nagaoka}.\\
More generally, in information geometry, a one-parameter family of affine connections $\nabla^{(\lambda)}$
indexed by $\lambda$ $(\lambda\in \mathbb{R})$, called $\lambda-$ connections, is introduced by Amari
and Nagaoka in (\cite{Amari Dif-geom},\cite{Amari Nagaoka}).\\
\begin{equation}
    \nabla^{(\lambda)}=\frac{1+\lambda}{2}\nabla+\frac{1-\lambda}{2}\nabla^{^{*}} ~~~
    (\text{or~ equivalently},\Gamma^{(\lambda)}=\frac{1+\lambda}{2}\Gamma+\frac{1-\lambda}{2}\Gamma^*).
\end{equation}
Obviously, $ \nabla^{(0)}=\hat{\nabla}$.\\
It can be shown that for a pair of conjugate connections $\nabla, \nabla^{^{*}}$, their
curvature tensors $R$, $\mathcal{R}^{^{*}}$  satisfy
\begin{equation}\label{R et R*}
    g(\mathcal{R}(X,Y)Z,W)+g(Z,\mathcal{R}^{^{*}}(X,Y)W)=0,
\end{equation}
and more generally
\begin{equation}
   g(\mathcal{R}^{(\lambda)}(X,Y)Z,W)+g(Z,\mathcal{R}^{*(\lambda)}(X,Y)W)=0.
\end{equation}
If the curvature tensor $\mathcal{R}$ of $\nabla$ vanishes, $\nabla$ is said
to be flat.
\\So, $\nabla$ is flat if and only if $\nabla^*$ is flat. In this case,
$(M,g,\nabla,\nabla^{^{*}})$ is said to be dually falt.\\
When $\nabla,\nabla^{^{*}}$ is dually flat, then $\nabla^{(\lambda)}$ is called
$\lambda$-transitively flat \cite{Uohashi K}. In such case, $(M,g,\nabla^{(\lambda)}
,\nabla^{*(\lambda)})$ is called an "$\lambda$-Hessian manifold", or a manifold
with $\lambda$-Hessian structure.\\

\subsection{Horizontal and vertical lifts}
Throughout this paper $M_{1}$ and $M_{2}$ will be respectively
$m_{1}$ and $m_{2}$ dimensional manifolds, $M_1\times M_2$ the
product manifold with the natural product coordinate system and
$\pi_1:M_{1}\times M_{2}\rightarrow M_{1}$ and $\pi_2
:M_{1}\times M_{2}\rightarrow M_{2}$ the usual projection maps.

We recall briefly how the calculus on the product manifold $M_1
\times M_2$ derives from that of $M_1$ and $M_2$ separately. For
details see \cite{B. O'Neill}.

Let $\varphi _{1}$ in $C^{\infty }(M_{1})$. The horizontal lift of
$\varphi_{1}$ to $M_{1}\times M_{2}$ is $\varphi_{1}^{h}=\varphi
_{1}\circ \pi_1$. One can define the horizontal lifts of
tangent vectors as follows. Let $p_1\in M_1$ and let $X_{p_1}\in
T_{p_1}M_{1}$. For any $p_2\in M_{2}$ the horizontal lift
 of $X_{p_1}$ to $(p_1,p_2)$ is the unique tangent vector $X_{(p_1,p_2)}^{h}$
in $T_{(p_1,p_2)}(M_{1}\times M_2)$ such that
$ d_{(p_1,p_2)}\pi_1(X_{(p_1,p_2)}^{h})=X_{p_1}$ and $d_{(p_1,p_2)}\pi_2(X_{(p_1,p_2)}^{h})=0.$\\
We can also define the horizontal lifts of vector fields as follows.
Let $X_1\in \Gamma (TM_{1})$. The horizontal lift of $X_1$ to
$M_{1}\times M_{2}$ is the vector field $X_1^{h}\in \Gamma
(T(M_{1}\times M_{2}))$ whose value at each $(p_1,p_2)$ is the horizontal
lift of the tangent vector $(X_1){p_1}$ to $(p_1,p_2)$. For $(p_1,p_2)\in M_1\times M_2$,
we will denote the set of the horizontal lifts to $(p_1,p_2)$ of all the tangent
vectors of $M_{1}$ at $p_1$ by $L(p_1,p_2)(M_{1})$. We will denote the set of the
horizontal lifts of all vector fields on $M_{1}$ by $\mathfrak{L}(M_{1})$.

The vertical lift $\varphi_2^v$ of a function $\varphi_2\in
C^{\infty}(M_2)$ to $M_1\times M_2$ and the vertical lift $X_2^v$ of
a vector field $X_2\in \Gamma (TM_{2})$ to $M_1\times M_2$ are
defined in the same way using the projection $\pi_2$. Note that
the spaces $\mathfrak{L}(M_{1})$ of the horizontal lifts and
$\mathfrak{L}(M_{2})$ of the vertical lifts are vector subspaces of
$\Gamma (T(M_{1}\times M_{2}))$ but neither is invariant under
multiplication by arbitrary functions $\varphi \in C^{\infty
}(M_{1}\times M_{2})$.\\

Observe that if $\{\frac{\partial}{\partial x_1},\ldots,\frac{\partial}
{\partial x_{m_1}}\}$ is the local basis of the vector fields (resp.
$\{dx_1,\ldots,dx_{m_1}\}$ is the local basis of $1$-forms ) relative to a chart
$(U,\Phi)$ of $M_1$ and $\{\frac{\partial}{\partial y_1},\ldots,\frac{\partial}
{\partial y_{m_2}}\}$ is the local basis of the vector fields (resp. $\{dy_1,
\ldots,dy_{m_2}\}$ the local basis of the $1$-forms) relative to a chart $(V,\Psi)$
of $M_2$, then $\{(\frac{\partial}{\partial x_1})^h,\ldots,(\frac{\partial}
{\partial x_{m_1}})^h,(\frac{\partial}{\partial y_1})^v,\\
\ldots,(\frac{\partial}{\partial y_{m_2}})^v \}$ is the local
basis of the vector fields (resp. $\{(dx_1)^h,\ldots,(dx_{m_1})^h,
(dy_1)^v,\\
\ldots,(dy_{m_2})^v\}$ is the local basis of the $1$-forms)  relative to the chart $(U\times
V,\Phi\times \Psi)$ of $M_1\times M_2$.\\

The following lemma will be useful later for our computations.
\begin{lemma} \label{lift} $\;$
\begin{enumerate}
\item Let $\varphi_i\in C^\infty(M_i)$, $X_i,Y_i\in \Gamma (TM_{i})$
and $\alpha _{i}\in \Gamma (T^* M_{i})$, $i=1,2$. Let
$\varphi=\varphi_1^h+\varphi_2^v$, $X=X_{1}^{h}+X_{2}^{v}$ and
$\alpha ,\beta \in \Gamma (T^*(M_{1}\times M_{2}))$. Then
\begin{enumerate}
\item[i/] For all $(i,I)\in \{(1,h),(2,v)\}$, we have
$$
X_i^I(\varphi)=X_i(\varphi_i)^I,\quad [X,Y_i^I]=[X_i,Y_i]^I \quad
\textrm{ and } \quad  \alpha _{i}^{I}(X)=\alpha_{i}(X_{i})^{I}.
$$
\item[ii/] If for all $(i,I)\in \{(1,h),(2,v)\}$ we have $\alpha
(X_{i}^{I})=\beta (X_{i}^{I})$, then $\alpha =\beta$.
\end{enumerate}
\item Let $\omega_i$ and $\eta_i$ be $r$-forms on $M_i$, $i=1,2$.
Let $\omega=\omega_1^h+\omega_2^v$ and $\eta=\eta_1^h+\eta_2^v$. We
have
$$
d\omega=(d\omega_1)^h+(d\omega_2)^v \quad \textrm{ and } \quad
\omega \wedge \eta=(\omega_1\wedge \eta_1)^h+(\omega_2\wedge
\eta_2)^v.
$$
\end{enumerate}
\end{lemma}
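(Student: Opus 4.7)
The plan is to prove each item by unpacking the definitions of the lifts and exploiting the fact that horizontal and vertical lifts are pullbacks by the projections: $\varphi_1^h=\varphi_1\circ \pi_1$, $\omega_1^h=\pi_1^{*}\omega_1$, and $X_1^h$ is the unique field which is $\pi_1$-related to $X_1$ and $\pi_2$-related to the zero field on $M_2$, with symmetric statements for the vertical lifts. Once these are in hand, most of the identities reduce to the naturality of standard operations (pullback, bracket, exterior derivative, wedge) under $\pi_1$ and $\pi_2$.

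For item 1(i), I would treat the three identities in turn. For $X_i^I(\varphi)$, I decompose $\varphi=\varphi_1^h+\varphi_2^v$ and compute $X_1^h(\varphi_1^h)=X_1(\varphi_1)\circ \pi_1=X_1(\varphi_1)^h$ from the $\pi_1$-relatedness of $X_1^h$ and $X_1$, while $X_1^h(\varphi_2^v)=0$ because $d\pi_2(X_1^h)=0$; the case $(i,I)=(2,v)$ is symmetric. For the bracket $[X,Y_i^I]$, I expand $X=X_1^h+X_2^v$ and use that $[X_1^h,Y_1^h]$ is $\pi_1$-related to $[X_1,Y_1]$ and $\pi_2$-related to $0$, hence equals $[X_1,Y_1]^h$, while $[X_2^v,Y_1^h]$ vanishes because a horizontal lift and a vertical lift are locally independent of the other factor's coordinates. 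For the covector identity, $\alpha_1^h(X)=(\pi_1^{*}\alpha_1)(X_1^h+X_2^v)=\alpha_1(X_1)^h+0$ by the same projection argument.

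Item 1(ii) follows from the pointwise splitting $T_{(p_1,p_2)}(M_1\times M_2)\cong T_{p_1}M_1\oplus T_{p_2}M_2$: at every point the horizontal lifts $(\partial/\partial x_j)^h$ together with the vertical lifts $(\partial/\partial y_k)^v$ form a basis of the tangent space, so two $1$-forms that coincide on all such lifts coincide on every tangent vector and are therefore equal.

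For item 2, the exterior-derivative identity is immediate from $\omega_i^I=\pi_i^{*}\omega_i$ together with the naturality $d\circ \pi_i^{*}=\pi_i^{*}\circ d$, which gives $d\omega=d(\pi_1^{*}\omega_1)+d(\pi_2^{*}\omega_2)=(d\omega_1)^h+(d\omega_2)^v$. The wedge identity is the main obstacle of the proof: distributing, one gets four terms, two of which, $\pi_1^{*}\omega_1\wedge \pi_1^{*}\eta_1=(\omega_1\wedge\eta_1)^h$ and $\pi_2^{*}\omega_2\wedge \pi_2^{*}\eta_2=(\omega_2\wedge\eta_2)^v$, are handled by the naturality of pullback under wedge; the remaining cross terms $\pi_1^{*}\omega_1\wedge \pi_2^{*}\eta_2$ and $\pi_2^{*}\omega_2\wedge \pi_1^{*}\eta_1$ must then be examined by evaluating on tuples of horizontal and vertical lifts via item 1(ii). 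This is the delicate step, since the cross terms do not vanish identically on mixed tuples of lifts, and one has to exploit the specific identification being used in the paper (equivalently, reduce to local coordinates using the basis described just before the lemma) in order to conclude.
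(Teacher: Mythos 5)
The paper does not actually prove this lemma: its ``proof'' is the single line ``See \cite{Nas}'', so there is no argument in the paper to compare yours against step by step. On the parts you do carry out, your reasoning is sound and is the standard one: $\pi_i$-relatedness gives $X_i^I(\varphi_j^J)=\delta_{ij}\,X_i(\varphi_i)^I$ and $\alpha_i^I(X)=\alpha_i(X_i)^I$; the bracket identity follows from $[X_1^h,Y_1^h]=[X_1,Y_1]^h$ plus the vanishing of mixed brackets of lifts (clear in product coordinates); item 1(ii) follows from the pointwise splitting $T_{(p_1,p_2)}(M_1\times M_2)\cong T_{p_1}M_1\oplus T_{p_2}M_2$; and $d\circ\pi_i^*=\pi_i^*\circ d$ gives the exterior-derivative identity.

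The genuine gap is exactly where you locate it, and it is not closable by ``reducing to local coordinates'': the wedge identity requires the cross terms $\omega_1^h\wedge\eta_2^v+\omega_2^v\wedge\eta_1^h$ to vanish, and they do not. Already for $r=1$, take $M_1=M_2=\mathbb{R}$, $\omega_1=\eta_1=dx$, $\omega_2=\eta_2=dy$; then $\omega=\eta=dx+dy$ and $\omega\wedge\eta=0$, whereas the right-hand side is $(dx\wedge dx)^h+(dy\wedge dy)^v=0$ only by accident of this symmetric choice — taking instead $\omega_1=dx$, $\eta_2=dy$, $\omega_2=\eta_1=0$ gives $\omega\wedge\eta=dx\wedge dy\neq 0$ while $(\omega_1\wedge\eta_1)^h+(\omega_2\wedge\eta_2)^v=0$. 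So the second identity of item 2 is false as literally stated for $r\geq 1$ (and for $r=0$ it likewise fails, since $(\varphi_1^h+\varphi_2^v)(\psi_1^h+\psi_2^v)\neq(\varphi_1\psi_1)^h+(\varphi_2\psi_2)^v$ in general). Your instinct to evaluate the cross terms on mixed tuples of lifts is the right diagnostic, but the conclusion it yields is that the statement needs a restrictive hypothesis (e.g.\ that the cross terms cancel, or that one of the pairs vanishes) or a different convention for the lift of a form, which must be extracted from \cite{Nas}; your proof cannot ``conclude'' as written because the claim it is trying to reach is not true in the generality stated.
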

\begin{proof}
See \cite{Nas}.
\end{proof}
\begin{remark}\label{rem dpi}
    Let $X$ be a vector field on $M_1\times M_2$, such that $d\pi_1(X)=\varphi(X_1\circ\pi_1)$
and $d\pi_2(X)=\phi(X_2\circ\pi_2)$, then $X=\varphi X_1^h+\phi X_2^v$.
\end{remark}

\subsection{The generalized warped product.}
let $\psi: M\rightarrow N$ be a smooth map between smooth manifolds and let $g$ be a metric on
 $k$-vector bundle $(F,P_F)$ over $N$. The metric
$g^{\psi}: \Gamma(\psi^{-1}F)\times \Gamma(\psi^{-1}F)\rightarrow C^{\infty}(M)$ on the pull-back
$(\psi^{-1}F,P_{\psi^{-1}F})$ over $M$  is defined by
$$
g^{\psi}(U,V)(p)=g_{\psi(p)}(U_p,V_p),~ ~\forall~ U,V\in \Gamma(\psi^{-1}F),~ p\in M.
$$
Given a linear connection $\nabla^N$ on $k$-vector bundle $(F,P_F)$ over $N$, the pull-back
connection $\nabla{\hskip-0.3cm^{^{^{\psi}}}}$ is the unique linear connection on the pull-back
$(\psi^{-1}F,P_{\psi^{-1}F})$ over $M$ such that
\begin{equation}
  \nabla{\hskip-0.3cm^{^{^{\psi}}}}_{X} \big(W\circ\psi\big)=\nabla^N{\hskip -0.4cm_{_{_{d\psi(X)}}}}\hskip -0.3cm W,
   \hskip 0.4cm \forall W\in\Gamma(F),~ \forall X\in\Gamma(TM).
\end{equation}
Further, let $U\in\psi^{-1}F$ and let $p\in M$, $X\in\Gamma(TM)$. Then
\begin{equation}
   (\nabla{\hskip-0.3cm^{^{^{\psi}}}}_{X} U)(p)=(\nabla^N{\hskip -0.4cm_{_{_{d{\!_{_{_p}}}
   \!\!\!\psi(X_{_p}\!)}}}}\hskip -0.4cm\widetilde{U})(\psi(p)),
\end{equation}
where $\widetilde{U}\in\Gamma(F)$ with $\widetilde{U}\circ\psi=U$.\\
Now, let $\pi_i$, i=1,2, be the usual projection of $M_1\times M_2$ onto $M_i$,
given a linear connection $\nabla{\hskip-0.2cm^{^{^{i}}}}$ on vector bundle $TM_i$, the pull-back
connection $\nabla{\hskip-0.4cm^{^{^{\pi_i}}}}$ is the unique linear connection on the pull-back
$M_1\times M_2\rightarrow\pi_i^{-1}(TM_i)$ such that for each $Y_i\in\Gamma(TM_i)$,
 $X\in\Gamma(TM_1\times M_2)$
\begin{equation}
  \nabla{\hskip-0.3cm^{^{^{\pi_i}}}}_{X} \big(Y_i\circ\pi_i\big)=\nabla{\hskip-0.2cm^{^{^{i}}}}
  {\hskip -0.05cm_{_{_{_{d\pi_i(X)}}}}}\hskip -0.5cmY_i.
\end{equation}
Further, let $(p_1,p_2)\in M_1\times M_2$, $U\in\pi_i^{-1}(TM)$ and $X\in\Gamma(TM_1\times M_2)$. Then
\begin{equation}
   (\nabla{\hskip-0.3cm^{^{^{\pi_i}}}}_{X}U)(p_1,p_2)=\big(\nabla{\hskip-0.2cm^{^{^{i}}}}
   _{d{\hskip -0.2cm_{_{_{(p_1,p_2)}}}}\hskip -0.6cm\pi_i(X_{(p_1,p_2)})}{\widetilde{U}}\big)(p_i),
\end{equation}

Now, let $c$ be an arbitrary real number and let $g_i$, $(i=1,2)$ be a Riemannian metric
tensors on $M_i$. Given a smooth positive function $f_i$ on $M_i$, the generalized
warped product of $(M_1,g_1)$ and $(M_2,g_2)$ is the product manifold $M_1\times M_2$ equipped
with the metric tensor (see \cite{Nass2})
$$
G_{_{f_1,f_2}}=(f_2^v)^2\pi_1^*g_{_{_1}}+(f_1^h)^2\pi_2^*g_{_{_2}} +cf_1^hf_2^v df_1^h\odot df_2^v,
$$
Where $\pi_i$, $(i=1,2)$ is the projection of $M_{_1}\times M_{_2}$ onto $M_{_i}$ and
$$
df_1^h\odot df_2^v = df_1^h\otimes df_2^v + df_2^v\otimes df_1^h.
$$
For all $X,Y\in\Gamma(TM_1\times M_2)$, we have
$$
\begin{array}{rl}
 G_{_{f_1,f_2}}(X,Y)&=(f_2^v)^2g_{_{_1}}^{\pi_1}(d\pi_1(X),d\pi_1(Y))
 +(f_1^h)^2g_{_{_2}}^{\pi_2}(d\pi_2(X),d\pi_2(Y)) \\
 &\\
&+cf_1^hf_2^v\left(X(f_1^h)Y(f_2^v)+X(f_2^v)Y(f_1^h))\right).
\end{array}
$$
It is the unique tensor fields such that for any $X_i,Y_i\in\Gamma(TM_i)$, $(i=1,2)$
\begin{equation} \label{equivalent generalized warped}
 \tilde{g}_{_{f_1f_2}}(X_i^I,Y_k^K)= \left\{
  \begin{array}{ccl}
   (f_{3-i}^{J})^2g_i(X_i,Y_i)^I,& &\text{if} ~(i,I)=(k,K) \\
&&\\
    cf_i^If_k^KX_i(f_i)^IY_k(f_k)^K,& & \text{otherwise}
  \end{array}
\right.
\end{equation}
If either $f_1\equiv 1$ or $f_2\equiv 1$ but not both, then we obtain a singly warped
product. If both $f_1\equiv 1$ and $f_2\equiv 1$, then we have a product manifold.
If neither $f_1$ nor $f_2$ is constant and $c=0$, then we have a nontrivial doubly warped product.
If neither $f_1$ nor $f_2$ is constant and $c\neq 0$, then we have a nontrivial generalized warped product.

Now, Let us assume that $(M_i,g_i)$, $(i=1,2)$ is a smooth connected Riemannian manifold.
The following proposition provides a necessary and sufficient condition for a symmetric tensor field
 $G_{f_1,f_2}$ of type $(0,2)$ of two Riemannian metrics to be a Riemannian metric.

 \begin{proposition}\label{condition generalized warped}\cite{Nass2}
Let $(M_i,g_i)$, $(i=1,2)$ be a Riemannian manifold
and let $f_i$ be a positive smooth function on $M_i$ and $c$ be an arbitrary real number.
Then the symmetric tensor field
$G_{f_1f_2}$is Riemannian metric on $M_1\times M_2$ if and only if
\begin{equation}\label{condition of metric}
   0 \leq c^2g_1(gradf_1,gradf_1)^h g_2(gradf_2,gradf_2)^v<1.
\end{equation}
\end{proposition}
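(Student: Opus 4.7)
The plan is to reduce positive-definiteness of $G_{f_1,f_2}$ at an arbitrary point $(p_1,p_2)$ to positive-definiteness of a $2\times 2$ symmetric bilinear form. Since $T_{(p_1,p_2)}(M_1\times M_2)\cong T_{p_1}M_1\oplus T_{p_2}M_2$, any tangent vector splits uniquely as $X=X_1^h+X_2^v$. Using (\ref{equivalent generalized warped}) and the identity $X_i^I(f_i^I)=X_i(f_i)^I$ from Lemma \ref{lift}, I would first expand
$$
G_{f_1,f_2}(X,X)=(f_2^v)^2 g_1(X_1,X_1)^h+(f_1^h)^2 g_2(X_2,X_2)^v+2c f_1^h f_2^v X_1(f_1)^h X_2(f_2)^v.
$$
Denote $A=(f_2^v)^2 g_1(X_1,X_1)^h$, $B=(f_1^h)^2 g_2(X_2,X_2)^v$, and $\alpha=|c|\sqrt{g_1(\mathrm{grad}f_1,\mathrm{grad}f_1)^h\, g_2(\mathrm{grad}f_2,\mathrm{grad}f_2)^v}$, so the condition in the proposition becomes $\alpha^2<1$.

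For the sufficiency direction, I would apply the Cauchy--Schwarz inequality to each factor $X_i(f_i)=g_i(X_i,\mathrm{grad}f_i)$ separately, obtaining
$$
|2c f_1^h f_2^v X_1(f_1)^h X_2(f_2)^v|\le 2\alpha\sqrt{AB}.
$$
Thus $G_{f_1,f_2}(X,X)\ge A+B-2\alpha\sqrt{AB}$, and the right-hand side is the value on $(\sqrt{A},\sqrt{B})$ of the quadratic form whose matrix is $\bigl(\begin{smallmatrix}1&-\alpha\\-\alpha&1\end{smallmatrix}\bigr)$. Its determinant is $1-\alpha^2$, so under $\alpha^2<1$ the form is positive definite. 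Combined with $f_i>0$ (which forces $A=B=0\Rightarrow X_1=0,\,X_2=0$), this yields $G_{f_1,f_2}(X,X)>0$ for $X\ne0$.

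For necessity I would argue contrapositively: assume $\alpha^2\ge1$ at some point $(p_1,p_2)$. Then necessarily $c\neq 0$ and $\mathrm{grad}f_i|_{p_i}\ne 0$ for $i=1,2$, so I can choose $X_1=\mathrm{grad}f_1|_{p_1}$ and $X_2=t\,\mathrm{grad}f_2|_{p_2}$ for a real parameter $t$. A direct substitution gives
$$
G_{f_1,f_2}(X,X)=(f_1\|\mathrm{grad}f_2\|)^2 t^2+2c f_1 f_2\|\mathrm{grad}f_1\|^2\|\mathrm{grad}f_2\|^2 t+(f_2\|\mathrm{grad}f_1\|)^2,
$$
a quadratic in $t$ whose discriminant equals $4 f_1^2 f_2^2\|\mathrm{grad}f_1\|^2\|\mathrm{grad}f_2\|^2(\alpha^2-1)\ge 0$. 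Hence some $t^\ast$ produces a nonzero $X$ with $G_{f_1,f_2}(X,X)\le 0$, contradicting the Riemannian property.

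There is no real obstacle beyond careful bookkeeping; the one conceptual point that must be singled out is that the Cauchy--Schwarz bound is sharp exactly when $X_i$ is colinear with $\mathrm{grad}f_i$, which is precisely why the same choice of $X_1,X_2$ used in the converse realizes the extremal case and shows the $< 1$ threshold is optimal.
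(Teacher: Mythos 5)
Your argument is correct. Note first that the paper itself offers no proof of this proposition: it is imported verbatim from \cite{Nass2} (Proposition \ref{condition generalized warped} carries the citation), so there is no in-paper argument to compare against. Your pointwise reduction is the natural one and it works: the expansion of $G_{f_1,f_2}(X,X)$ for $X=X_1^h+X_2^v$ is right (the cross terms $X_2^v(f_1^h)$ and $X_1^h(f_2^v)$ vanish, so only $2cf_1^hf_2^vX_1(f_1)^hX_2(f_2)^v$ survives), the Cauchy--Schwarz bound $|X_i(f_i)|\le \Vert X_i\Vert\,\Vert\mathrm{grad}f_i\Vert$ gives the sufficiency via the $2\times 2$ form with determinant $1-\alpha^2$, and your remark that $A=B=0$ forces $X=0$ because $f_i>0$ closes that direction. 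For necessity, the choice $X_1=\mathrm{grad}f_1$, $X_2=t\,\mathrm{grad}f_2$ is exactly the extremal configuration for Cauchy--Schwarz, the discriminant computation $4f_1^2f_2^2\Vert\mathrm{grad}f_1\Vert^2\Vert\mathrm{grad}f_2\Vert^2(\alpha^2-1)$ is correct, and the conclusion $G_{f_1,f_2}(X,X)\le 0$ for some nonzero $X$ (note $X\neq 0$ for every $t$ since $X_1\neq 0$) rules out positive definiteness even in the boundary case $\alpha^2=1$, where one only gets degeneracy rather than negativity --- consistent with the corollary following the proposition, which treats exactly that degenerate case. The only cosmetic caveat is that the stated condition is an inequality between functions on $M_1\times M_2$, so your pointwise analysis should be quantified over all $(p_1,p_2)$; since $\Vert\mathrm{grad}f_1\Vert^2$ depends only on $p_1$ and $\Vert\mathrm{grad}f_2\Vert^2$ only on $p_2$, this is immediate.
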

\begin{corollary}\cite{Nass2}
If the symmetric tensor field $G_{f_1,f_2}$ of type $(0,2)$ on $M_1\times M_2$ is degenerate,
then for any  $i\in\{1,2\}$, $g_i(gradf_i,gradf_i)$ is positive constant $k_i$ with
$$
k_i=\frac{1}{c^2k_{(3-i)}}.
$$
\end{corollary}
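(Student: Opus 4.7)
The plan is to read off the corollary as the boundary case of Proposition~\ref{condition generalized warped}. That proposition characterizes $G_{f_1,f_2}$ as a Riemannian metric by the pointwise strict inequality $c^2 g_1(gradf_1,gradf_1)^h\, g_2(gradf_2,gradf_2)^v<1$; the lower bound $0\leq\cdots$ is automatic since $g_1$ and $g_2$ are Riemannian. The degenerate case of $G_{f_1,f_2}$ corresponds precisely to the boundary equality. Thus, assuming $G_{f_1,f_2}$ is degenerate as a tensor field (i.e.\ at every point), one obtains, for all $(p_1,p_2)\in M_1\times M_2$,
\[
c^2\, g_1(gradf_1,gradf_1)(p_1)\cdot g_2(gradf_2,gradf_2)(p_2)=1.
\]

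Next I would carry out a standard separation-of-variables argument. The left-hand side factors as a function of $p_1$ alone times a function of $p_2$ alone, and the product equals the constant $1/c^2$. Fixing $p_2$ forces $p_1\mapsto g_1(gradf_1,gradf_1)(p_1)$ to be constant; call this constant $k_1$. Fixing $p_1$ shows analogously that $g_2(gradf_2,gradf_2)\equiv k_2$. Substitution yields $c^2 k_1 k_2=1$, which is exactly $k_i=\tfrac{1}{c^2\, k_{(3-i)}}$. Positivity of each $k_i$ is automatic: each is non-negative as a Riemannian inner product of a vector with itself, and their product $1/c^2>0$ forbids either factor from vanishing.

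There is essentially no technical obstacle here, since the argument is purely algebraic once Proposition~\ref{condition generalized warped} is in hand. The only subtlety worth flagging is the reading of ``degenerate'': degeneracy of the tensor \emph{field} must be interpreted as degeneracy at every point, which is what enables the separation-of-variables step. Were one only to know $G_{f_1,f_2}$ degenerate at a single point, the algebraic identity would hold at that point but the global constancy of $g_i(gradf_i,gradf_i)$ would no longer follow.
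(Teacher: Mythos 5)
The paper itself offers no proof of this corollary --- both Proposition~\ref{condition generalized warped} and the corollary are quoted from \cite{Nass2} --- so the only in-paper material to measure your argument against is the proposition. Your second step, the separation of variables, is correct and is surely the intended one: once $c^2\,g_1(\mathrm{grad}f_1,\mathrm{grad}f_1)(p_1)\,g_2(\mathrm{grad}f_2,\mathrm{grad}f_2)(p_2)=1$ holds for all $(p_1,p_2)$, freezing one argument makes the other factor constant, and $c^2k_1k_2=1$ together with non-negativity of each factor gives positivity and the stated relation (note this already forces $c\neq0$). Your remark that ``degenerate'' must be read pointwise everywhere is also the right caveat.

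The gap is in your first step, which is where the actual content of the corollary sits. Proposition~\ref{condition generalized warped} only asserts: Riemannian metric $\Leftrightarrow$ the product is everywhere $<1$. From degeneracy it therefore yields no more than ``the product is $\geq 1$'' (at the points of degeneracy, if one reads the proposition pointwise). Your claim that degeneracy ``corresponds precisely to the boundary equality'' --- i.e.\ that the strict inequality $>1$ is excluded --- is precisely what needs proving, and it does not follow from the proposition as a black box: a symmetric bilinear form that fails to be positive definite may perfectly well be nondegenerate but indefinite. To close the gap you must locate the zero set of $\det G_{f_1,f_2}$. For instance, with $a=g_1(\mathrm{grad}f_1,\mathrm{grad}f_1)$ and $b=g_2(\mathrm{grad}f_2,\mathrm{grad}f_2)$, formula (\ref{equivalent generalized warped}) shows that the plane spanned by $(\mathrm{grad}f_1)^h$ and $(\mathrm{grad}f_2)^v$ carries the Gram matrix with entries $(f_2^v)^2a$, $(f_1^h)^2b$ and cross term $cf_1^hf_2^vab$, whose determinant is $(f_1^hf_2^v)^2ab\,(1-c^2ab)$, while on the $G_{f_1,f_2}$-orthogonal complement the cross terms vanish and $G_{f_1,f_2}$ restricts to the positive definite form $(f_2^v)^2g_1\oplus(f_1^h)^2g_2$. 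Hence $G_{f_1,f_2}$ is degenerate at a point if and only if $a>0$, $b>0$ and $c^2ab=1$ there; this supplies the missing equivalence (and the positivity of the $k_i$ at the same stroke). With that computation inserted, your argument is complete.
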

In all what follows, we suppose that $f_1$ and $f_2$ satisfies the inequality (\ref{condition of metric}).
\begin{lemma}\label{calculate of X on G}\cite{Nass2}
 Let $X$ be an arbitrary vector field of $M_1\times M_2$, if there exist $\varphi_i,\psi_i\in C^{\infty}(M_i)$
and $X_i,Y_i\in \Gamma(TM_i)$, $(i=1,2)$ such that
$$
  \left\{
     \begin{array}{lll}
       G_{f_1f_2}(X,Z_1^h)= G_{f_1f_2}(\varphi_2^vX_1^h+\varphi_1^hX_2^v,Z_1^h), & \\
&&\forall ~Z_i\in \Gamma(TM_i),\\
       G_{f_1f_2}(X,Z_2^v)= h^hG_{f_1f_2}(\psi_2^vY_1^h+\psi_1^hY_2^v,Z_2^v). &
     \end{array}
   \right.
$$
Then we have,
\begin{equation}
\begin{array}{ccl}
  X& = &\varphi_2^vX_1^h+\psi_1^hY_2^v+cf_1^hf_2^v\left\{\psi_{2}^vY_1(f_1)^h
\!-\!\varphi_{2}^vX_1(f_1)^h\right\}grad(f_{2}^v) \\
&&\\
 &-&cf_1^hf_2^v\left\{\psi_{1}^hY_2(f_2)^v
\!-\!\varphi_{1}^hX_2(f_2)^v\right\}grad(f_{1}^h)
\end{array}
\end{equation}
\end{lemma}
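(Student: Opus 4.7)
My plan is to use the non-degeneracy of $G_{f_1f_2}$ to identify $X$ with the explicit candidate on the right-hand side. By Proposition \ref{condition generalized warped} together with the standing assumption (\ref{condition of metric}), $G_{f_1f_2}$ is a Riemannian metric, hence non-degenerate, so a vector field on $M_1\times M_2$ is determined by its $G_{f_1f_2}$-inner products against all test fields. Since horizontal lifts $Z_1^h$ and vertical lifts $Z_2^v$ generate $\Gamma(T(M_1\times M_2))$ as a $C^\infty(M_1\times M_2)$-module, it suffices to test against such lifts. Denoting by $\tilde X$ the candidate vector field on the right-hand side, I shall verify $G_{f_1f_2}(\tilde X,Z_1^h)=G_{f_1f_2}(\varphi_2^v X_1^h+\varphi_1^h X_2^v,Z_1^h)$ and the analogous identity against $Z_2^v$; then $\tilde X=X$ by non-degeneracy.

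The main input for the computation is the defining property of the gradient, $G_{f_1f_2}(\mathrm{grad}(\phi),Z)=Z(\phi)$, combined with Lemma \ref{lift}, which yields $Z_1^h(f_1^h)=Z_1(f_1)^h$, $Z_2^v(f_2^v)=Z_2(f_2)^v$, together with the vanishing cross-actions $Z_1^h(f_2^v)=0$ and $Z_2^v(f_1^h)=0$. Consequently $\mathrm{grad}(f_1^h)$ pairs with $Z_1^h$ to give $Z_1(f_1)^h$ and with $Z_2^v$ to give $0$, and symmetrically for $\mathrm{grad}(f_2^v)$. These four identities are the engine of the proof.

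Then I would expand $G_{f_1f_2}(\tilde X,Z_1^h)$ term by term using (\ref{equivalent generalized warped}). The $\mathrm{grad}(f_2^v)$ correction drops out, the $\psi_1^h Y_2^v$ piece contributes through the $c$-cross term of the metric exactly what the $\psi_1^h Y_2(f_2)^v$ part of the $\mathrm{grad}(f_1^h)$ correction cancels, while the $\varphi_1^h X_2(f_2)^v$ part of that correction survives. The sum collapses to
$$\varphi_2^v (f_2^v)^2 g_1(X_1,Z_1)^h + c f_1^h f_2^v\, \varphi_1^h X_2(f_2)^v Z_1(f_1)^h,$$
which is precisely $G_{f_1f_2}(\varphi_2^v X_1^h+\varphi_1^h X_2^v,Z_1^h)$ when expanded by (\ref{equivalent generalized warped}). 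The symmetric computation against $Z_2^v$ runs in the same way: now the $\mathrm{grad}(f_1^h)$ contribution vanishes, and the $\varphi_2^v X_1(f_1)^h$ part of the $\mathrm{grad}(f_2^v)$ correction cancels the cross contribution of $\varphi_2^v X_1^h$, leaving $G_{f_1f_2}(\psi_2^v Y_1^h+\psi_1^h Y_2^v,Z_2^v)$.

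The main obstacle is not conceptual but combinatorial: the right-hand side involves two pairs of gradient corrections each multiplied by four-function prefactors, and one must track signs carefully so that the unwanted cross contributions cancel pairwise while the wanted ones survive in exactly the prescribed form. The structural reason for the specific answer is that the two hypotheses determine only the dominant horizontal part $\varphi_2^v X_1^h$ and the dominant vertical part $\psi_1^h Y_2^v$ of $X$; the gradient terms are precisely the corrections needed so that the cross terms of $G_{f_1f_2}$ reproduce the mixed values $\varphi_1^h X_2(f_2)^v$ and $\psi_2^v Y_1(f_1)^h$ specified on the two right-hand sides.
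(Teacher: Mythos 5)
Your argument is correct, and since the paper supplies no proof of this lemma (it is quoted verbatim from \cite{Nass2}), there is nothing internal to compare it against; the route you take is the natural one. The key identities you isolate, namely $G_{f_1f_2}(\mathrm{grad}(f_1^h),Z_1^h)=Z_1(f_1)^h$, $G_{f_1f_2}(\mathrm{grad}(f_1^h),Z_2^v)=0$ and their mirror images for $\mathrm{grad}(f_2^v)$, do produce exactly the cancellation pattern you describe: pairing the candidate against $Z_1^h$, the $\mathrm{grad}(f_2^v)$ correction dies, the $\psi_1^h Y_2(f_2)^v$ part of the $\mathrm{grad}(f_1^h)$ correction cancels the cross term of $\psi_1^h Y_2^v$, and what survives is $\varphi_2^v(f_2^v)^2g_1(X_1,Z_1)^h+cf_1^hf_2^v\varphi_1^hX_2(f_2)^vZ_1(f_1)^h$, which is the right-hand side of the first hypothesis; the computation against $Z_2^v$ is symmetric. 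Combined with non-degeneracy of $G_{f_1f_2}$ (guaranteed by the standing assumption (\ref{condition of metric})) and the fact that the lifts $Z_1^h,Z_2^v$ span each tangent space, this pins down $X$. Two remarks: your reading of the stray factor $h^h$ in the second hypothesis as a typographical artifact is the only one consistent with the stated conclusion, and your argument tacitly takes $\mathrm{grad}(f_i^I)$ to be the gradient with respect to $G_{f_1f_2}$ rather than the lift of the base gradient --- that is indeed the interpretation under which the identity holds, and it is worth stating explicitly.
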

\section{Dualistic structure with respect to $G_{_{f_1f_2}}$.}
\begin{proposition}
  Let $(\Tilde{g}_{_{f_1f_2}}, \nabla, \nabla^{^{*}})$ be a dualistic structure on $M_{_1}\times M_{_2}$. Then there exists an
affine connections $\nabla{\hskip -0.2cm^{^{^{i}}}}$, $\nabla^{^{*}}{\hskip -0.36cm^{^{^{i}}}}~$ on $M_{_i}$, such that
$(g_{_{_i}},\nabla{\hskip -.2cm^{^{^{i}}}},\nabla^{^{*}}{\hskip -.36cm^{^{^{i}}}}~~~)$ is a dualistic structure on
$M_{_i}$ $(i=1,2)$.
\end{proposition}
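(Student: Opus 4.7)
The strategy is to produce $\nabla^{i}$ and $\nabla^{*i}$ on $M_{i}$ by pushing the product connections $\nabla,\nabla^{*}$ down through horizontal and vertical lifts, and then to derive the conjugacy identity on each factor by restricting the product conjugacy identity to lifted arguments. Concretely, for $X_{1},Y_{1}\in\Gamma(TM_{1})$ I would define $\nabla^{1}_{X_{1}}Y_{1}$ as the unique vector field on $M_{1}$ satisfying
\begin{equation*}
(\nabla^{1}_{X_{1}}Y_{1})\circ\pi_{1} \;=\; d\pi_{1}\bigl(\nabla_{X_{1}^{h}}Y_{1}^{h}\bigr),
\end{equation*}
and $\nabla^{*1}$ in the analogous way using $\nabla^{*}$; symmetrically $\nabla^{2}$ and $\nabla^{*2}$ are built from vertical lifts and $\pi_{2}$.

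\textbf{Well-definedness and connection axioms.} The first technical task is to check that the right-hand side above descends to a vector field on $M_{1}$, i.e.\ that its value at $(p_{1},p_{2})$ is independent of $p_{2}$. This I would extract from Lemma~\ref{lift}(i/): the identities $X_{1}^{h}(\varphi_{1}^{h})=X_{1}(\varphi_{1})^{h}$ and $[X_{1}^{h},Y_{1}^{h}]=[X_{1},Y_{1}]^{h}$ keep all scalars appearing in a Koszul-type expansion of $\nabla$ confined to horizontal pull-backs, so no genuine $M_{2}$-dependence survives after $d\pi_{1}$; Remark~\ref{rem dpi} then locates the image in $\mathfrak{L}(M_{1})$. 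The $C^{\infty}(M_{1})$-linearity in $X_{1}$ and the Leibniz rule in $Y_{1}$ transfer from the product connection through the lift, so $\nabla^{1}$ is a bona fide affine connection, and the same argument applies verbatim to $\nabla^{*1}$, $\nabla^{2}$, $\nabla^{*2}$.

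\textbf{Conjugacy.} To obtain conjugacy on $M_{1}$ with respect to $g_{1}$, I would evaluate the product conjugacy identity on horizontal lifts,
\begin{equation*}
X_{1}^{h}\bigl(\tilde{g}_{f_{1}f_{2}}(Y_{1}^{h},Z_{1}^{h})\bigr) \;=\; \tilde{g}_{f_{1}f_{2}}\bigl(\nabla_{X_{1}^{h}}Y_{1}^{h},Z_{1}^{h}\bigr) + \tilde{g}_{f_{1}f_{2}}\bigl(Y_{1}^{h},\nabla^{*}_{X_{1}^{h}}Z_{1}^{h}\bigr),
\end{equation*}
and simplify each side using~\eqref{equivalent generalized warped}. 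On the left, the horizontal--horizontal pair gives $(f_{2}^{v})^{2}g_{1}(Y_{1},Z_{1})^{h}$, and since $f_{2}^{v}$ is constant along $M_{1}$-fibers the derivative becomes $(f_{2}^{v})^{2}X_{1}(g_{1}(Y_{1},Z_{1}))^{h}$. On the right, Lemma~\ref{calculate of X on G} extracts the horizontal (and gradient-type) components of $\nabla_{X_{1}^{h}}Y_{1}^{h}$ and $\nabla^{*}_{X_{1}^{h}}Z_{1}^{h}$; the horizontal parts pair against $Z_{1}^{h}$ and $Y_{1}^{h}$ via~\eqref{equivalent generalized warped} to produce $(f_{2}^{v})^{2}g_{1}(\nabla^{1}_{X_{1}}Y_{1},Z_{1})^{h}$ and $(f_{2}^{v})^{2}g_{1}(Y_{1},\nabla^{*1}_{X_{1}}Z_{1})^{h}$. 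Dividing through by the positive factor $(f_{2}^{v})^{2}$ yields the conjugate identity for $(g_{1},\nabla^{1},\nabla^{*1})$ on $M_{1}$; the mirror calculation with vertical lifts and $\pi_{2}$ settles the case of $M_{2}$.

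\textbf{Main obstacle.} The delicate point is the non-diagonal coupling $cf_{1}^{h}f_{2}^{v}\,df_{1}^{h}\odot df_{2}^{v}$ in $\tilde g_{f_{1}f_{2}}$. Although it vanishes on purely horizontal or purely vertical pairs, $\nabla_{X_{1}^{h}}Y_{1}^{h}$ can a priori develop a vertical component, and pairing that component with the horizontal $Z_{1}^{h}$ through the cross term reintroduces a $cf_{1}^{h}f_{2}^{v}$ contribution. The crux is to show that these mixed contributions coming from $\nabla$ and from $\nabla^{*}$ cancel inside the conjugacy identity, and the decomposition formula of Lemma~\ref{calculate of X on G} is the precise tool that controls this interaction. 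Once that cancellation is checked, the remainder is routine rescaling by $(f_{2}^{v})^{2}$ (respectively $(f_{1}^{h})^{2}$) and the proposition follows.
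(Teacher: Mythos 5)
Your overall strategy --- restrict the conjugacy identity for $(\nabla,\nabla^{*},G_{f_1f_2})$ to horizontal (resp.\ vertical) lifts and read off a conjugacy identity on each factor --- is the same as the paper's, but your definition of the induced connections is not, and the obstacle you flag in your final paragraph is a genuine gap that cannot be closed by the cancellation you assert. With $(\nabla^{1}_{X_1}Y_1)\circ\pi_1:=d\pi_1(\nabla_{X_1^h}Y_1^h)$ one computes, using $Z_1^h(f_2^v)=0$ and $d\pi_2(Z_1^h)=0$,
$$
G_{f_1f_2}(\nabla_{X_1^h}Y_1^h,Z_1^h)=(f_2^v)^2\,g_1^{\pi_1}\bigl(d\pi_1(\nabla_{X_1^h}Y_1^h),Z_1\circ\pi_1\bigr)+cf_1^hf_2^v\,(\nabla_{X_1^h}Y_1^h)(f_2^v)\,Z_1(f_1)^h ,
$$
so after subtracting the $g_1$-terms the product conjugacy identity leaves the residue
$$
cf_1^hf_2^v\bigl[(\nabla_{X_1^h}Y_1^h)(f_2^v)\,Z_1(f_1)^h+(\nabla^{*}_{X_1^h}Z_1^h)(f_2^v)\,Y_1(f_1)^h\bigr],
$$
and there is no reason for this to vanish. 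Indeed the hypothesis is an arbitrary dualistic structure on the product: replacing $(\nabla,\nabla^{*})$ by $(\nabla+A,\nabla^{*}+B)$ with $G_{f_1f_2}(A_XY,Z)+G_{f_1f_2}(Y,B_XZ)=0$ and $A$ taking purely vertical values changes $(\nabla_{X_1^h}Y_1^h)(f_2^v)$, hence the residue, while leaving $d\pi_1(\nabla_{X_1^h}Y_1^h)$ --- and therefore your $\nabla^{1}$ --- untouched; since the $g_1$-conjugate of a fixed connection is unique, your $\nabla^{1}$ and $\nabla^{*1}$ cannot both remain conjugate under such a perturbation. Lemma \ref{calculate of X on G} does not rescue this: it reconstructs a vector field from its $G_{f_1f_2}$-pairings but gives no control over the vertical component of $\nabla_{X_1^h}Y_1^h$.

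The paper sidesteps the problem by building the offending term into the definition of the induced connections, setting
$$
(\nabla^{1}_{X_1}Y_1)\circ\pi_1=d\pi_1(\nabla_{X_1^h}Y_1^h)+c\frac{f_1^h}{f_2^v}\,(\nabla_{X_1^h}Y_1^h)(f_2^v)\,(\mathrm{grad}f_1)\circ\pi_1
$$
and analogously for $\nabla^{*1}$, $\nabla^{2}$, $\nabla^{*2}$. Because $g_1(\mathrm{grad}f_1,Z_1)=Z_1(f_1)$, this correction converts the cross-term contribution into part of the $g_1$-pairing, so that $G_{f_1f_2}(\nabla_{X_1^h}Y_1^h,Z_1^h)=(f_2^v)^2g_1(\nabla^{1}_{X_1}Y_1,Z_1)^h$ holds exactly, term by term, with no cancellation between $\nabla$ and $\nabla^{*}$ required; dividing by $(f_2^v)^2>0$ then gives conjugacy on $M_1$. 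Your well-definedness discussion is at the same level of rigor as the paper's and carries over to the corrected definition, but without that correction term the construction produces the wrong connections and the proof does not close.
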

\begin{proof}
Taking the affine connections on $M_{_i}$, $(i=1,2)$.
$$
\left\{
  \begin{array}{llll}
  (\nabla{\hskip -0.2cm^{^{^{1}}}}_{X_1}Y_1)\circ \pi_1=d\pi_1(\nabla_{X_1^h}Y_1^h)
+c\frac{f_1^h}{f_{2}^v}(\nabla_{X_1^h}Y_1^h)(f_{2}^v)(gradf_1)\circ \pi_1, & \forall~X_1,Y_1\in \Gamma(TM_{_1})\\
(\nabla^{^{*}}{\hskip -.4cm^{^{^{1}}}}_{X_1}Y_1)\circ \pi_1=d\pi_1(\nabla^*_{X_1^h}Y_1^h)
+c\frac{f_1^h}{f_{2}^v}(\nabla_{X_1^h}^*Y_1^h)(f_{2}^v)(gradf_1)\circ \pi_1,&\\
&&\\
(\nabla{\hskip -.2cm^{^{^{2}}}}_{X_2}Y_2)\circ \pi_2=d\pi_2(\nabla_{X_2^v}Y_2^v)
+c\frac{f_2^v}{f_{1}^h}(\nabla_{X_2^v}Y_2^v)(f_{1}^h)(gradf_2)\circ \pi_2, & \forall~X_2,Y_2\in \Gamma(TM_{_2})\\
(\nabla^{^{*}}{\hskip -.34cm^{^{^{2}}}}_{X_2}Y_2)\circ \pi_2=d\pi_2(\nabla^*_{X_2^v}Y_2^v)
+c\frac{f_2^v}{f_{1}^h}(\nabla_{X_2^v}^*Y_2^v)(f_{1}^h)(gradf_2)\circ \pi_2.&\\
  \end{array}
\right.
$$
Therfore, we have for all $X_i,Y_i,Z_i\in\Gamma(TM_{_i})$ $(i=1,2)$.
   \begin{equation}\label{horizdual}
    X_i^I(G_{_{f_1f_2}}(Y_i^I,Z_i^I))=G_{_{f_1f_2}}(\nabla_{X_i^I}Y_i^I,Z_i^I)+G_{_{f_1f_2}}(Y_i^I,\nabla^{^{*}}_{X_i^I}Z_i^I).
   \end{equation}
Since, $d\pi{\hskip -0.1cm{_{_{_{_{3-i}}}}}}\!\!\!\!\!(Z_i^I)=0$, $X_i^I(f_{3-i}^{J})=0$
 and for any $X\in\Gamma(TM_1\times M_2)$,
$$
g_{f_1f_2}(X,Z_i^I)=(f_{3-i}^J)^2g_i^{\pi_i}(d\pi_i(X),Z_i\circ\pi_i)+cf_1^hf_2^vX(f_{3-i}^J)Z_i(f_i)^I,
$$
then the equation (\ref{horizdual}) is aquivalent to
$$
(f_{3-i}^J)^{2}(X_i(g_{i}(Y_i,Z_i)))^I=(f_{3-i}^J)^{2}\big \{g_{i}(\nabla{\hskip -0.15cm^{^{^{i}}}}_{X_i}Y_i,Z_i)
+g_{i}(Y_i,\nabla^*{\hskip -0.3cm^{^{^{i}}}}_{X_i}Z_i)\}^I.
$$
Where $(i,I), (3-i,J)\in\{(1,h),(2,v)\}$.
Hence, the pair of affine connections $\nabla{\hskip -0.15cm^{^{^{i}}}}$ and $\nabla^{^{*}}{\hskip -0.3cm^{^{^{i}}}}~$  ~are
conjugate with respect to $g_{i}$.
\end{proof}
\begin{proposition}
Let $(g_{i},\nabla{\hskip -0.15cm^{^{^{i}}}},\nabla^{^{*}}{\hskip -0.3cm^{^{^{i}}}}~)$ be a dualistic structure on
$M_{i}$ $(i=1,2)$. Then there exists a dualistic structure on $M_{_1}\times M_{_2}$ with respect to $G_{f_1f_2}$.
\end{proposition}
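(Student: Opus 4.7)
The plan is to reverse the construction used in the previous proposition: starting from dualistic structures $(g_i,\nabla^{i},\nabla^{*i})$ on the factors, I build candidate connections $\nabla, \nabla^{*}$ on $M_1\times M_2$ by prescribing their action on horizontal and vertical lifts, and then verify the conjugacy relation (\ref{dual}) with respect to $G_{f_1 f_2}$. The design principle is to invert the factor-level formulas displayed in the proof of the previous proposition, which expressed $\nabla^{i}_{X_i}Y_i$ as a projection of $\nabla_{X_i^I}Y_i^I$ plus a correction along $grad f_i$.

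Explicitly, for like-type pure lifts I would set
$$
\nabla_{X_i^I} Y_i^I = (\nabla^{i}_{X_i} Y_i)^{I} + A_i(X_i, Y_i),
$$
where the correction term $A_i(X_i,Y_i)$ lies in the span of $grad(f_{3-i}^{J})$, with coefficient chosen so that the cross term $c f_1^h f_2^v\, df_1^h\odot df_2^v$ of $G_{f_1 f_2}$ is absorbed. On mixed lifts $(X_i^I, Y_{3-i}^{J})$, the value of $\nabla_{X_i^I} Y_{3-i}^{J}$ must be supplied afresh; I would take it as a sum of a horizontal and a vertical piece whose coefficients are determined by imposing Lemma \ref{calculate of X on G} so that the decomposition is unambiguous. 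The same prescription, using $\nabla^{*i}$ in place of $\nabla^{i}$, defines $\nabla^{*}$. These objects are then extended by $C^{\infty}(M_1\times M_2)$-linearity in the first slot and the Leibniz rule in the second slot to affine connections on the whole product.

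By Lemma \ref{lift}(1)(ii) and the tensorial character of both sides, verifying (\ref{dual}) reduces to checking it on triples $(X_i^I, Y_j^J, Z_k^K)$ of pure lifts. In the homogeneous case $i=j=k$, formula (\ref{equivalent generalized warped}) makes the identity collapse, after division by the common weight $(f_{3-i}^{J})^{2}$, to the conjugacy relation of $\nabla^{i}$ and $\nabla^{*i}$ on $(M_i,g_i)$, which holds by hypothesis; the residual $c$-proportional terms arising from the cross term of the metric cancel against the contributions of $A_i$ and its $\nabla^{*}$-analogue on the right-hand side.

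The principal obstacle is the mixed case, where not all of $i,j,k$ agree. There $G_{f_1 f_2}(Y_j^J, Z_k^K) = c f_1^h f_2^v\, Y_j(f_j)^J Z_k(f_k)^K$ whenever $j\neq k$, and differentiating along $X_i^I$ produces several terms involving second derivatives of $f_1, f_2$ and their gradients, all of which must be reproduced exactly on the right. The correction coefficients in $\nabla$ and $\nabla^{*}$ must be balanced simultaneously; condition (\ref{condition of metric}), ensuring non-degeneracy of $G_{f_1 f_2}$, guarantees that the resulting linear system for these coefficients is uniquely solvable. Once all lift-type combinations are verified, the triple $(G_{f_1 f_2}, \nabla, \nabla^{*})$ furnishes the required dualistic structure on $M_1\times M_2$.
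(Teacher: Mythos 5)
Your overall strategy coincides with the paper's: prescribe $\nabla$ and $\nabla^{*}$ on horizontal and vertical lifts as the lifted factor connections plus a correction in the direction of a gradient, extend to affine connections, and verify the conjugacy identity case by case on pure lifts using (\ref{equivalent generalized warped}). However, as written the proposal does not prove the statement, because the one piece of genuine mathematical content — the existence of correction terms that make \emph{all} the lift-type identities hold simultaneously — is asserted rather than established. Your ansatz confines $\nabla_{X_i^I}Y_i^I-(\nabla{\hskip -0.15cm^{^{^{i}}}}_{X_i}Y_i)^I$ to the one-dimensional span of $grad(f_{3-i}^{J})$, yet this single vector field is constrained by the conditions $A(X_i^I,Y_i^I,Z_i^I)=0$ \emph{and} $A(X_i^I,Y_i^I,Z_{3-i}^{J})=0$ at once, i.e.\ by pairings against test vectors of both types; the system is overdetermined, and its consistency is precisely what has to be checked. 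The paper resolves this by exhibiting the explicit tensors
$$
B{\hskip -0.08cm_{_{f\!_{_{i}}}}}(X_i,Y_i)=cf_{_i}\left\{X_i(Y_i(f_i))
-\nabla{\hskip -0.2cm^{^{^{i}}}}_{X_i}Y_i(f_i)\right\}+cX_i(f_{_i})Y_i(f_{_i})-g_i(X_i,Y_i),
$$
together with their starred analogues and a concrete formula for the mixed lifts $\nabla_{X_1^h}Y_2^v$, and then verifying $A=0$ on each combination. The appearance of the $\nabla{\hskip -0.2cm^{^{^{i}}}}$-Hessian of $f_i$ and of the term $-g_i(X_i,Y_i)$ in this coefficient is not something your scheme produces without carrying out the computation, and it is exactly what makes the homogeneous and mixed cases cancel together.

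A second, related weakness is the appeal to condition (\ref{condition of metric}). Non-degeneracy of $G_{f_1f_2}$ lets you recover a vector from its $G_{f_1f_2}$-pairings (so a conjugate of a \emph{given} connection exists and is unique), but it does not address the actual difficulty here: $\nabla$ and $\nabla^{*}$ enter the conjugacy identity jointly, and both are required to project onto the prescribed factor connections $\nabla{\hskip -0.15cm^{^{^{i}}}}$ and $\nabla^{^{*}}{\hskip -0.3cm^{^{^{i}}}}$. So the "uniquely solvable linear system" argument conflates invertibility of the metric with consistency of a coupled, constrained system. To turn the proposal into a proof you would need to either write down the correction terms explicitly (recovering (\ref{dual generalized})) or genuinely set up and solve the system of conditions, showing a solution of the assumed form exists.
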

\begin{proof}
Let $\nabla$ and $\nabla^*$ be the connections on $M_1\times M_2$ given by
\begin{equation}\label{full generalized}
\left\{
\begin{array}{lll}
d\pi_1(\nabla _XY)&\!\!\!\!=\nabla{\hskip-0.4cm^{^{^{^{\pi_{1}}}}}}_{X}d\pi_1(Y)+Y(\ln f{\!_{_{2}}}\!\!^{v})d\pi_1(X)
+X(\ln f{\!_{_{2}}}\!\!^{v})d\pi_1(Y)\\
&\!\!\!\!+\frac{1}{f_1^hf_2^v(1-c^2b_1^hb_2^v)}\big\{\frac{(f_1^h)^2}{f_2^v}
B{\hskip -0.08cm_{_{f_{_{2}}^v}}}(X,Y)-cb_2^vf_2^vB{\hskip -0.08cm_{_{f_{_{1}}^h}}}(X,Y)\\
&\hskip -1.2cm-cf_1^h(1-cb_2^v)\big[X(f_1^h)Y(f_2^v)+X(f_2^v)Y(f_1^h)\big]\big\}(gradf_1)\circ \pi_1,& \\
&\\
d\pi_2(\nabla _XY)&\!\!\!\!=\nabla{\hskip-0.4cm^{^{^{^{\pi_{2}}}}}}_{X}d\pi_2(Y)+Y(\ln f{\!_{_{1}}}\!\!^{h})d\pi_2(X)
+X(\ln f{\!_{_{1}}}\!\!^{h})d\pi_2(Y)\\
&\!\!\!\!+\frac{1}{f_1^hf_2^v(1-c^2b_1^hb_2^v)}\big\{\frac{(f_2^v)^2}{f_1^h}
B{\hskip -0.08cm_{_{f\!_{_{1}}^h}}}(X,Y)-cb_1^hf_1^hB{\hskip -0.08cm_{_{f\!_{_{2}}^v}}}(X,Y)\\
&\hskip -1.2cm-cf_2^v(1-cb_1^h)\big[X(f_1^h)Y(f_2^v)+X(f_2^v)Y(f_1^h)\big]\big\}(gradf_2)\circ \pi_2,& \\
&\\
d\pi_1(\nabla^* _XY)&\!\!\!\!= \nabla{\hskip-0.4cm^{^{^{^{\pi_1}}}}}_{X}{\!\!\!^*}d\pi_1(Y)+Y(\ln f{\!_{_{2}}}\!\!^{v})d\pi_1(X)
+X(\ln f{\!_{_{2}}}\!\!^{v})d\pi_1(Y)\\
&\!\!\!\!+\frac{1}{f_1^hf_2^v(1-c^2b_1^hb_2^v)}\big\{\frac{(f_1^h)^2}{f_2^v}
B^*{\hskip -0.2cm_{_{f_{_{2}}^v}}}(X,Y)-cb_2^vf_2^v
B^*{\hskip -0.2cm_{_{f_{_{1}}^h}}}(X,Y)\\
&\hskip -1.2cm-cf_1^h(1-cb_2^v)\big[X(f_1^h)Y(f_2^v)+X(f_2^v)Y(f_1^h)\big]\big\}(gradf_1)\circ \pi_1,&\\
&\\
d\pi_2(\nabla^* _XY)&\!\!\!\!= \nabla{\hskip-0.4cm^{^{^{^{\pi_2}}}}}_{X}{\!\!\!^*}d\pi_2(Y)+Y(\ln f{\!_{_{1}}}\!\!^{h})d\pi_2(X)
+X(\ln f{\!_{_{1}}}\!\!^{h})d\pi_2(Y)\\
&\!\!\!\!+\frac{1}{f_1^hf_2^v(1-c^2b_1^hb_2^v)}\big\{\frac{(f_2^v)^2}{f_1^h}
B^*{\hskip -0.2cm_{_{f_{_{1}}^h}}}(X,Y)-cb_1^hf_1^h
B^*{\hskip -0.2cm_{_{f_{_{2}}^v}}}(X,Y)\\
&\hskip -1.2cm-cf_2^v(1-cb_1^h)\big[X(f_1^h)Y(f_2^v)+X(f_2^v)Y(f_1^h)\big]\big\}(gradf_2)\circ \pi_2,&
\end{array}
\right.
\end{equation}

for any $X,Y\in \Gamma(TM_1\times M_2)$. Where  $B{\hskip -0.1cm_{_{f\!_{_{i}}^I}}}$
and $B^*{\hskip -0.3cm_{_{f\!_{_{i}}^I}}}$ $(i=1,2)$
the $(0,2)$ tensors fields of $f_{_i}^I$ given respectively by
$$
\begin{array}{ccl}
    B{\hskip -0.1cm_{_{f\!_{_{i}}^I}}}(X,Y)&=&cf_{_i}^I\left\{X(Y(f_i^I))-g_{_{_{i}}}{\hskip -0.1cm^{\pi_i}}\big(\nabla{\hskip -0.3cm^{^{^{\pi_i}}}}_{X}d\pi_i(Y),(gradf_i)\circ\pi_i\big)\right\}  \\
   &+&cX(f_{_i}^I)Y(f_{_i}^I)-\frac{1}{f_{j}^J}g_{_{_{i}}}{\hskip -0.1cm^{\pi_i}}\big(d\pi_i(X),d\pi_i(Y)\big),
  \end{array}
$$
and
$$
\begin{array}{ccl}
    B^*{\hskip -0.3cm_{_{f\!_{_{i}}^I}}}(X,Y)&=&cf_{_i}^I\left\{X(Y(f_i^I))-g_{_{_{i}}}{\hskip -0.1cm^{\pi_i}}\big(\nabla^*{\hskip -0.5cm^{^{^{\pi_i}}}}_{X}d\pi_i(Y),(gradf_i)\circ\pi_i\big)\right\}  \\
   &+&cX(f_{_i}^I)Y(f_{_i}^I)-\frac{1}{f_{j}^J}g_{_{_{i}}}{\hskip -0.1cm^{\pi_i}}\big(d\pi_i(X),d\pi_i(Y)\big),
  \end{array}
$$
$j=i-3$ and $(i,I), (j,J)\in \{(1,h),(2,v)\}$.
\\
Or, for any $X_i,Y_i\in \Gamma(TM_i)$ $(i=1,2)$
\begin{equation}  \label{dual generalized}
\left\{
  \begin{array}{lll}
    \nabla _{X_1^h}Y_1^{h}=(\nabla{\hskip -0.25cm^{^{^{1}}}}_{X_1}Y_1)^h
+f_{_{2}}^vB_{f_1}(X_1,Y_1)^hgrad(f_{_{2}}^v); &  \\
 \nabla _{X_2^v}Y_2^{vh}=(\nabla{\hskip -0.2cm^{^{^{2}}}}_{X_2}Y_2)^v
+f_{_{1}}^hB_{f_2}(X_2,Y_2)^vgrad(f_{_{1}}^h); &  \\
&\\
 \nabla^* _{X_1^h}Y_1^{h}=(\nabla^*{\hskip -0.38cm^{^{^{1}}}}_{X_1}Y_1)^h
+f_{_{2}}^vB_{f_1}^*(X_1,Y_1)^hgrad(f_{_{2}}^v); &  \\

 \nabla^* _{X_2^v}Y_2^{v}=(\nabla^*{\hskip -0.38cm^{^{^{2}}}}_{X_2}Y_2)^v
+f_{_{1}}^hB_{f_2}^*(X_2,Y_2)^vgrad(f_{_{1}}^h); &  \\
&\\
\nabla_{X_1^h}Y_{2}^v=\nabla^*_{X_1^h}Y_{2}^v=-cX_1(f_1)^hY_2(f_2)^v\big\{f_{_2}^vgrad(f_{_1}^h)+f_{_1}^hgrad(f_{_2}^v)\}&\\
\hskip 3.4cm+\big(Y_2(\ln f_{_{_{2}}})\big)^vX_1^h+\big(X_1(\ln f_2)\big)^hY_2^v\\

\nabla _{Y_2^v}X_{1}^h=\nabla^* _{Y_2^v}X_{1}^h=\nabla _{X_1^h}Y_{2}^v.

\end{array}
\right.
\end{equation}
Where $B{\hskip -0.08cm_{_{f\!_{_{i}}}}}$
and $B^*{\hskip -0.25cm_{_{f\!_{_{i}}}}}$ $(i=1,2)$
the $(0,2)$ tensors fields of $f_{_i}$ given respectively by
$$
B{\hskip -0.08cm_{_{f\!_{_{i}}}}}(X_i,Y_i)=cf_{_i}\left\{X_i(Y_i(f_i))
-\nabla{\hskip -0.2cm^{^{^{i}}}}_{X_i}Y_i(f_i)\right\}+cX_i(f_{_i})Y_i(f_{_i})-g_i(X_i,Y_i),
$$
and
$$
B^*{\hskip -0.2cm_{_{f\!_{_{i}}}}}(X_i,Y_i)=cf_{_i}\left\{X_i(Y_i(f_i))
-\nabla^*{\hskip -0.35cm^{^{^{i}}}}_{X_i}Y_i(f_i)\right\}+cX_i(f_{_i})Y_i(f_{_i})-g_i(X_i,Y_i),
$$
Let us assume that $(g_{i},\nabla{\hskip -0.17cm^{^{^{i}}}},\nabla^*{\hskip -0.33cm^{^{^{i}}}}~)$ is
a dualistic structures on $M_{_i}$, $i=1,2$. Let $A$ be a tensor field of type $(0,3)$ defined
for any $X,Y, Z\in \Gamma(TM_1\times M_2)$ by
$$
A(X,Y,Z)=X(G_{f_1f_2}(Y,Z))-G_{f_1f_2}(\nabla_XY,Z)-G_{f_1f_2}(Y,\nabla^*_XZ),
$$
if  $X_i,Y_i, Z_i\in \Gamma(TM_i)$, $i=1,2$, then we have
$$
 X_i^I(G_{_{f_1f_2}}(Y_i^I,Z_i^I))=X_i^I((f_{3-i}^J)^2g_i(X_i,Y_i)^I).
$$
Since $d\pi_{3-i}(X_i^I)=0$, it follows that $d\pi_{3-i}(X_i^I)(f_{3-i}=X_i^I(f_{3-i}^J)=0$, and hence
$$
 X_i^I(G_{_{f_1f_2}}(Y_i^I,Z_i^I))=(f_{3-i}^J)^2(X(g_i(Y_i,Z_i)))^I,
$$
as $(g_{i},\nabla{\hskip -0.17cm^{^{^{i}}}},\nabla^*{\hskip -0.33cm^{^{^{i}}}}~)$
 is dualistic structure, we have thus
$$
 X_i^I(G_{_{f_1f_2}}(Y_i^I,Z_i^I))=(f_{3-i}^J)^2\{g_i(\nabla{\hskip -0.17cm^{^{^{i}}}}_{X_i}Y_i,Z_i)^I
+g_i(Y_i,\nabla^*{\hskip -0.33cm^{^{^{i}}}}_{X_i}Z_i)^I\}, $$
from Equations (\ref{equivalent generalized warped}), (\ref{dual generalized}),
then it's easily seen that the following equation holds
$$
A(X_i^I,Y_i^I, Z_i^I)=0
$$
In the different lifts $(i\neq j)$, we have
$$
X_{i}^I(G_{_{f_1f_2}}(Y_i^I,Z_j^J))=cf_j^J(Z_j(f_j))^JX_i((f_i(Y(f_i))))^I,
$$
$$
G_{_{f_1f_2}}(\nabla_{X_{i}^J}Y_i^I,Z_j^J)=f_j^J\left\{cf_iX_i(Y_i(f_i))+cX_i(f_i)Y_i(f_i)-g_i(X_i,Y_i)
\right\}^IZ_j(f_j)^J,
$$
and
$$
G_{_{f_1f_2}}(\nabla_{X_{i}^I}^*Z_j^J,Y_i^I)=f_j^Jg_i(X_i,Y_i)^IZ_j(f_j)^J.
$$
We add these equations and obtain
$$
A(X_{i}^I,Y_i^I, Z_j^J)=0
$$
Hence the same applies for $A(X_j^J,Y_i^I, Z_{i}^I)=A(X_{i}^I,Y_j^J, Z_i^I)=0$.   \\
This proves that $\nabla^*$ is conjugate to $\nabla$ with respect to $G_{_{f_1f_2}}.$

\end{proof}

We recall that the connection $\nabla$ on $M_{_1}\times M_{_2}$ induced by $\nabla{\hskip -0.19cm^{^{^{1}}}}$ and
 $\nabla{\hskip -0.19cm^{^{^{2}}}}$ on $M_{_1}$ and $M_{_2}$ respectively, is given by Equation
(\ref{dual generalized}).

\begin{proposition}
$(M_{_1},\nabla{\hskip -0.19cm^{^{^{1}}}},g_1)$ and $(M_{_2},\nabla{\hskip -0.19cm^{^{^{2}}}},g_2)$ are
statistical manifolds if and only if $(M_{_1}\times M_{_2},G_{_{f_1f_2}},\nabla)$ is a statistical manifold.
\end{proposition}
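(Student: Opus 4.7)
The plan is to leverage the two preceding propositions. Together they establish that the conjugate of $\nabla$ on $M_{_1}\times M_{_2}$ with respect to $G_{_{f_1f_2}}$ is precisely the connection $\nabla^{*}$ built from the factorwise conjugates $\nabla^{*}{\hskip -0.36cm^{^{^{i}}}}~$, and that conjugacy on the product is equivalent to conjugacy on each factor. Since $(M,\nabla,g)$ is statistical exactly when $\nabla$ is torsion-free and its conjugate $\nabla^{*}$ is torsion-free as well, the proposition reduces to the claim that $\nabla$ (respectively $\nabla^{*}$) is torsion-free on $M_{_1}\times M_{_2}$ if and only if $\nabla{\hskip -0.19cm^{^{^{1}}}}$ and $\nabla{\hskip -0.19cm^{^{^{2}}}}$ (respectively $\nabla^{*}{\hskip -0.36cm^{^{^{1}}}}~$ and $\nabla^{*}{\hskip -0.36cm^{^{^{2}}}}~$) are torsion-free. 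It suffices to treat $\nabla$, the argument for $\nabla^{*}$ being literally identical.

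Next I would compute the torsion of $\nabla$ on each of the three types of lifted vector-field pairs using (\ref{dual generalized}) and the bracket identities from Lemma \ref{lift}. For a mixed pair, the symmetry $\nabla_{X_1^h}Y_2^v = \nabla_{Y_2^v}X_1^h$ together with $[X_1^h, Y_2^v]=0$ forces $T^{\nabla}(X_1^h, Y_2^v) = 0$ automatically, so this case carries no information. For a horizontal pair, using $[X_1^h,Y_1^h] = [X_1,Y_1]^h$, one obtains
$$
T^{\nabla}(X_1^h, Y_1^h) = T^{1}(X_1, Y_1)^{h} + f_{_{2}}^{v}\bigl(B_{f_1}(X_1,Y_1) - B_{f_1}(Y_1,X_1)\bigr)^{h} grad(f_{_{2}}^{v}),
$$
and analogously for the vertical pair with $B_{f_2}$, $f_{_{1}}^{h}$, and $grad(f_{_{1}}^{h})$.

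The crux is to antisymmetrize $B_{f_i}$. The terms $cX_i(f_i)Y_i(f_i) - g_i(X_i,Y_i)$ are manifestly symmetric in $X_i, Y_i$, while the Hessian-like piece $cf_i\{X_i(Y_i(f_i)) - (\nabla{\hskip -0.19cm^{^{^{i}}}}_{X_i}Y_i)(f_i)\}$ antisymmetrizes to $-cf_i\,T^{i}(X_i, Y_i)(f_i)$ via $[X_i,Y_i] = \nabla{\hskip -0.19cm^{^{^{i}}}}_{X_i}Y_i - \nabla{\hskip -0.19cm^{^{^{i}}}}_{Y_i}X_i - T^{i}(X_i,Y_i)$. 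Substituting back gives
$$
T^{\nabla}(X_1^h, Y_1^h) = T^{1}(X_1, Y_1)^{h} - c f_{_{1}}^{h} f_{_{2}}^{v}\,T^{1}(X_1, Y_1)(f_1)^{h}\, grad(f_{_{2}}^{v}).
$$
Hence $T^{1} \equiv 0$ forces the right-hand side to vanish. For the converse, since $T^{1}(X_1,Y_1)^{h}$ is horizontal while $grad(f_{_{2}}^{v})$ is vertical on $M_{_1}\times M_{_2}$, applying $d\pi_{1}$ to $T^{\nabla}(X_1^h, Y_1^h)=0$ isolates $T^{1}(X_1, Y_1) = 0$. The vertical-vertical case is entirely symmetric.

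The main obstacle, and the only non-mechanical step, is the antisymmetrization identity $B_{f_i}(X_i,Y_i) - B_{f_i}(Y_i,X_i) = -cf_i\, T^{i}(X_i,Y_i)(f_i)$, which exposes the factor torsion as the sole driver of the coefficient of $grad(f_{_{3-i}}^{J})$ in the lifted torsion. Once established for $\nabla$, the identical computation with starred quantities (the $(0,2)$ tensor $B^{*}_{f_i}$ has the same symmetric part and an analogous antisymmetric part involving $T^{*i}$) yields the analogous equivalence for $\nabla^{*}$. Combining: $(M_{_1}\times M_{_2}, G_{_{f_1f_2}}, \nabla)$ is statistical iff $\nabla$ and $\nabla^{*}$ are both torsion-free on the product iff $\nabla{\hskip -0.19cm^{^{^{i}}}}$ and $\nabla^{*}{\hskip -0.36cm^{^{^{i}}}}~$ are torsion-free for $i=1,2$, iff both $(M_{_i}, \nabla{\hskip -0.19cm^{^{^{i}}}}, g_i)$ are statistical manifolds.
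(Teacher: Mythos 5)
Your argument is correct in substance but follows a genuinely different route from the paper's. The paper proves the forward implication by checking the two defining conditions directly: it shows $\nabla$ is torsion-free by projecting the torsion through $d\pi_i$ onto the pull-back connections (via the formulation (\ref{full generalized})), and it shows $\nabla G_{_{f_1f_2}}$ is symmetric by computing $(\nabla G_{_{f_1f_2}})(X_i^I,Y_i^I,Z_i^I)=(f_{3-i}^J)^2((\nabla{\hskip -0.17cm^{^{^{i}}}}g_i)(X_i,Y_i,Z_i))^I$ and verifying that all mixed components vanish; the converse is obtained by restricting these identities to lifts. You instead invoke the equivalence, for a conjugate pair, between symmetry of $\nabla g$ and torsion-freeness of the conjugate connection, which converts the whole proposition into a statement about the torsions of $\nabla$ and $\nabla^{^{*}}$; the antisymmetrization identity $B_{f_i}(X_i,Y_i)-B_{f_i}(Y_i,X_i)=-cf_i\,T^{i}(X_i,Y_i)(f_i)$ then does all the work. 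Your route has the merit of exposing exactly where the factor torsion enters the product connection and of treating $\nabla$ and $\nabla^{^{*}}$ on an equal footing; the paper's route avoids leaning on the preceding conjugacy propositions and on the (true, but not proved in the paper) equivalence between its two definitions of a statistical manifold.

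One step needs repair: in the converse you assert that $grad(f_{_2}^v)$ is vertical, so that applying $d\pi_1$ isolates $T^{1}$. For the non-diagonal metric $G_{_{f_1f_2}}$ with $c\neq 0$ the $G$-gradient of $f_2^v$ is in general \emph{not} vertical, since the $G$-orthogonal complement of the horizontal distribution no longer coincides with the vertical one; this is precisely the feature that distinguishes $G_{_{f_1f_2}}$ from a doubly warped metric. The conclusion still holds with a small detour: at points where $df_2=0$ one has $grad(f_{_2}^v)=0$ and your torsion identity gives $T^{1}(X_1,Y_1)^h=0$ directly, while at points where $df_2\neq 0$ one first applies $d\pi_2$ to $T^{\nabla}(X_1^h,Y_1^h)=0$ to get $T^{1}(X_1,Y_1)(f_1)=0$ (using that $d\pi_2(grad f_{_2}^v)$ pairs nontrivially with $f_2$ there), after which the identity again reduces to $T^{1}(X_1,Y_1)^h=0$. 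With that adjustment, and the symmetric argument for the vertical pair and for the starred connections, your proof goes through.
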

\begin{proof}
    Let us assume that $(M_{_i},\nabla{\hskip -0.17cm^{^{^{i}}}},g_{_{_i}})$ $(i=1,2)$ is statistical manifold.\\
    Firstly, we show that $\nabla$ is torsion-free. Indeed; by Equation (\ref{full generalized}), we have
for any $X,Y\in\Gamma(TM_1\times M_2)$
$$
        d\pi_i(T(X,Y)) =  \nabla{\hskip-0.4cm^{^{^{\pi_i}}}}_{X}d\pi_i(Y)
-\nabla{\hskip-0.4cm^{^{^{\pi_i}}}}_{Y}d\pi_i(X)-d\pi_i([X,Y])
$$
Since for $i=1,2$, $\nabla{\hskip -0.17cm^{^{^{i}}}}$ is torsion-free, then
$$
\nabla{\hskip-0.4cm^{^{^{\pi_i}}}}_{X}d\pi_i(Y)
-\nabla{\hskip-0.4cm^{^{^{\pi_i}}}}_{Y}d\pi_i(X)=d\pi_i([X,Y])
$$
   Therefore, from Remark \ref{rem dpi}, the connection $\nabla$ is torsion-free.\\
Secondly, we show that $\nabla G_{f_1,f_2}$ is symmetric. In fact; for $i=1,2$,
$$
(\nabla G_{_{f_1f_2}})(X_i^I,Y_i^I,Z_i^J)=X_i^I(G_{_{f_1f_2}}(Y_i^I,Z_i^I))-G_{_{f_1f_2}}(\nabla_{X_i^I}Y_i^I,Z_i^I)
-G_{_{f_1f_2}}(Y_i^I,\nabla_{X_i^I}Z_i^I)
$$
by Equations (\ref{equivalent generalized warped}), (\ref{dual generalized}) and since
$(\nabla{\hskip -0.17cm^{^{^{i}}}}g_i)$, $i=1,2$, is symmetric, we have
      $$\begin{array}{lll}
          (\nabla G_{_{f_1f_2}})(X_i^I,Y_I^I,Z_i^I) & = & (f_{3-i}^J)^2((\nabla{\hskip -0.17cm^{^{^{i}}}}g_i)(X_i,Y_i,Z_i))^I \\
           & = &(f_{3-i}^J)^2((\nabla{\hskip -0.17cm^{^{^{i}}}}g_i)(Y_i,X_i,Z_i))^h  \\
           & = &  (\nabla G_{_{f_1f_2}})(Y_i^I,X_I^I,Z_i^I).
        \end{array}
      $$
In the different lifts, we have
$$
(\nabla G_{_{f_1f_2}})(X_i^I,Y_i^I, Z_{3-i}^J)=(\nabla G_{_{f_1f_2}})(X_{3-i}^J,Y_i^I, Z_i^I)
=(\nabla G_{_{f_1f_2}})(X_i^I,Y_{3-i}^I, Z_{i}^I)=0,
$$
Therefore, $(\nabla G_{_{f_1f_2}})$ is symmetric.
Thus $(M_{_1}\times M_{_2},g_{_{f_1f_2}},\nabla)$ is a statistical manifold.\\

Conversely, if $(M_{_1}\times M_{_2},G_{_{f_1f_2}},\nabla)$ is statistical manifold,
then $(\nabla G_{_{f_1f_2}})$ is symmetric and $\nabla$ is torsion-free, particularly,
when $X_i, Y_i, Z_i\in \Gamma(TM_i)$, we have
$$
\left\{
    \begin{array}{rll}
     (\nabla G_{_{f_1f_2}})(X_i^I,Y_I^I,Z_i^I)&=(\nabla G_{_{f_1f_2}})(Y_i^I,X_I^I,Z_i^I),& \\
       & &\forall~i=1,2,\\
      T(X_i^I,Y_i^I)&=0.&
    \end{array}
  \right.
$$
Then, by Equations (\ref{equivalent generalized warped}) and (\ref{dual generalized}), we
obtained, for $i=1,2$, $\nabla{\hskip -0.17cm^{^{^{i}}}}g_i$, is symmetric and $\nabla{\hskip -0.17cm^{^{^{i}}}}$,
is torsion-free. Therefore, $(M_i,\nabla{\hskip -0.17cm^{^{^{i}}}},g_i)$, $i=1,2$, is statistical manifold.
\end{proof}
\section{Dualistic structure with respect to $\tilde{g}_{_{f_1f_2}}$.}
Let $c$ be an arbitrary real number and let $g_i$, $(i=1,2)$ be a Riemannian metric
tensors on $M_i$. Given a smooth positive function $f_i$ on $M_i$,
we define a metric tensor field on $M_1\times M_2$ by
\begin{equation}\label{Other metric}
        \tilde{g}_{_{f_1,f_2}}=\pi_1^*g_{_{_1}}+(f_1^h)^2\pi_2^*g_{_{_2}}
        +\frac{c^2}{2}(f_2^v)^2 df_1^h\odot df_1^h.
\end{equation}
Where $\pi_i$, $(i=1,2)$ is the projection of $M_{_1}\times M_{_2}$ onto $M_{_i}$ (see \cite{Nass2}).\\
For all $X,Y\in\Gamma(TM_1\times M_2)$, we have
$$
\begin{array}{rl}
 \tilde{g}_{_{f_1,f_2}}(X,Y)&=g_{_{_1}}^{\pi_1}(d\pi_1(X),d\pi_1(Y))
 +(f_1^h)^2g_{_{_2}}^{\pi_2}(d\pi_2(X),d\pi_2(Y))+(cf_2^v)^2X(f_1^h)Y(f_1^h)).
\end{array}
$$
It is the unique tensor fields such that for any $X_i,Y_i\in\Gamma(TM_i)$, $(i=1,2)$
\begin{equation} \label{Other equivalent generalized warped}
 \left\{
  \begin{array}{ll}
\tilde{g}_{_{f_1f_2}}(X_1^h,Y_1^h)=g_1(X_1,Y_1)^h+(cf_2^v)^2X_1(f_1)Y_1(f_1)^h, & \\
\tilde{g}_{_{f_1f_2}}(X_1^h,Y_2^v)=\tilde{g}_{_{f_1f_2}}(Y_2^v,X_1^h)=0,&\\
\tilde{g}_{_{f_1f_2}}(X_2^v,Y_2^v)=(f_{1}^{h})^2g_2(X_2,Y_2)^v.
  \end{array}
\right.
\end{equation}
\begin{proposition}
  Let $(\tilde{g}_{_{f_1f_2}}, \nabla, \nabla^*)$ be a dualistic structure on $M_{_1}\times M_{_2}$. Then there exists an
affine connections $\nabla{\hskip -0.15cm^{^{^{i}}}}$, $\nabla^*{\hskip -0.3cm^{^{^{i}}}}~$ on $M_{_i}$, such that
$(g_{_{_i}},\nabla{\hskip -.15cm^{^{^{i}}}},\nabla^*{\hskip -.3cm^{^{^{i}}}}~)$ is a dualistic structure on
$M_{_i}$ $(i=1,2)$.
\end{proposition}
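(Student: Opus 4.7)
The plan is to parallel the corresponding proposition in Section 3 for the metric $G_{f_1f_2}$, with one key simplification: since $\tilde{g}_{f_1f_2}(X_1^h, Y_2^v) = 0$, horizontal and vertical lifts are mutually $\tilde{g}$-orthogonal, so no correction term involving $\operatorname{grad} f_i$ is needed in the induced connections on $M_i$. Accordingly, I would define
\[
(\nabla^1_{X_1} Y_1)\circ\pi_1 := d\pi_1(\nabla_{X_1^h} Y_1^h), \qquad (\nabla^2_{X_2} Y_2)\circ\pi_2 := d\pi_2(\nabla_{X_2^v} Y_2^v),
\]
and analogously for $\nabla^{*1}, \nabla^{*2}$. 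The first thing to verify is that these prescriptions descend to genuine affine connections on $M_i$ (i.e., that the projections are independent of the other factor); this uses the projection identities of Lemma \ref{lift} together with the affine connection axioms applied to $\nabla$ and $\nabla^*$.

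For the fiber case $i=2$, I would apply the product-level duality equation to the triple $(X_2^v, Y_2^v, Z_2^v)$. Since $X_2^v(f_1^h) = 0$, the left-hand side collapses to $(f_1^h)^2 (X_2\, g_2(Y_2, Z_2))^v$. Using the horizontal-vertical orthogonality together with (\ref{Other equivalent generalized warped}), only the $d\pi_2$-components of $\nabla_{X_2^v}Y_2^v$ and $\nabla^*_{X_2^v}Z_2^v$ survive, and the right-hand side reduces to $(f_1^h)^2 [g_2(\nabla^2_{X_2}Y_2, Z_2) + g_2(Y_2, \nabla^{*2}_{X_2}Z_2)]^v$. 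Dividing through by the strictly positive $(f_1^h)^2$ gives the $g_2$-conjugacy of $\nabla^2$ and $\nabla^{*2}$.

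The base case $i=1$ is the main obstacle. The pairing $\tilde{g}(X_1^h, Y_1^h) = g_1(X_1, Y_1)^h + (cf_2^v)^2 X_1(f_1)^h Y_1(f_1)^h$ contains an extra term whose coefficient $(cf_2^v)^2$ varies with $p_2 \in M_2$. Applying the duality equation to $(Z_1^h, X_1^h, Y_1^h)$ and using $X(f_1^h) = d\pi_1(X)(f_1)^h$, the resulting identity takes the schematic form
\[
A(p_1) + (cf_2(p_2))^2\, B(p_1) = 0,
\]
where $A$ is the $g_1$-duality defect of $(\nabla^1, \nabla^{*1})$ and $B$ is an auxiliary defect in the $\nabla^1, \nabla^{*1}$-transport of $df_1$. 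The crucial step is to justify separating this into two independent equations; in the nontrivial regime $c \neq 0$ with $f_2$ non-constant (the case in which the generalized warped product is genuinely new), fixing $p_1$ and varying $p_2$ forces $A \equiv 0$ and $B \equiv 0$ separately. The equation $A=0$ is exactly the desired $g_1$-conjugacy of $\nabla^1$ and $\nabla^{*1}$, while $B = 0$ appears as an automatic bonus identity relating the induced connections to $f_1$. This completes the argument.
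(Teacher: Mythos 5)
Your treatment of the fiber factor $i=2$ is fine and in fact cleaner than the paper's (whose stated formula carries a spurious factor $1/(f_1^h)^2$ that its own verification line contradicts). The genuine gap is in the base case $i=1$, and it is not a repairable technicality: the separation argument you propose proves too much. If your schematic identity $A(p_1)+(cf_2(p_2))^2B(p_1)=0$ could really be split by varying $p_2$, you would obtain not only the duality defect $A\equiv 0$ but also the ``bonus identity'' $B\equiv 0$, which unwinds to
$H^{f_1}(X_1,Y_1)\,Z_1(f_1)+H^{*f_1}(X_1,Z_1)\,Y_1(f_1)=0$ for all $X_1,Y_1,Z_1$, i.e.\ essentially that the Hessian of $f_1$ vanishes in the $\mathrm{grad}f_1$ directions. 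That is false for a generic dualistic structure --- already for $\nabla=\nabla^*=\hat\nabla$ the Levi-Civita connection of $\tilde g_{f_1f_2}$ and a generic $f_1$. So the separation cannot be legitimate. The reason it fails is the point you deferred: $A$ and $B$ are built from $d\pi_1(\nabla_{X_1^h}Y_1^h)$ evaluated at $(p_1,p_2)$, and this quantity is \emph{not} independent of $p_2$ for an arbitrary dualistic structure on the product (take $\nabla=\hat\nabla+K$, $\nabla^*=\hat\nabla-K$ with $\tilde g(K(X,Y),Z)=h(p_2)\,df_1^h(X)df_1^h(Y)df_1^h(Z)$ for non-constant $h$). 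The affine connection axioms and Lemma~\ref{lift} do not give fiber-independence, so your $\nabla^1$ is not well defined as written, and without fiber-independence the ``fix $p_1$, vary $p_2$'' step has nothing to act on. The argument is thus circular where it is not false, and it additionally excludes the admissible cases $c\neq 0$ with $f_2$ constant, where the cross term $(cf_2)^2\,df_1\otimes df_1$ is still present but only a single linear relation between $A$ and $B$ is available.

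The paper's proof sidesteps all of this by \emph{defining} $\nabla^1$ with a correction term: $(\nabla^1_{X_1}Y_1)\circ\pi_1=d\pi_1(\nabla_{X_1^h}Y_1^h)+(cf_2^v)^2H^{f_1^h}(X_1^h,Y_1^h)(\mathrm{grad}f_1)\circ\pi_1$ (and similarly with $H^{*}$ for $\nabla^{*1}$). The Hessian coefficient is chosen exactly so that, when the product duality equation is expanded on the triple $(X_1^h,Y_1^h,Z_1^h)$ using $\tilde g(W,Z_1^h)=g_1^{\pi_1}(d\pi_1(W),Z_1\circ\pi_1)+(cf_2^v)^2W(f_1^h)Z_1(f_1)^h$, the entire $(cf_2^v)^2$-block cancels identically and what remains is precisely the $g_1$-conjugacy of $\nabla^1$ and $\nabla^{*1}$ --- no case distinction on $c$ or $f_2$, and no claim that $B$ vanishes on its own. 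If you want to salvage your write-up, you must build this $\mathrm{grad}f_1$-correction into the definition of $\nabla^1$ rather than hoping the uncorrected defect splits.
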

\begin{proof}
Taking the affine connections on $M_{_i}$, $(i=1,2)$.
\begin{equation}\label{connection1,*}
    \left\{
       \begin{array}{ll}
        (\nabla{\hskip -0.2cm^{^{^{1}}}}_{X_1}Y_1)\circ \pi_1=d\pi_1(\nabla_{X_1^h}Y_1^h)
+(cf_2^v)^2H^{f_1^h}(X_1^h,Y_1^h)(gradf_1)\circ \pi_1, & \\
(\nabla^{^{*}}{\hskip -.4cm^{^{^{1}}}}_{X_1}Y_1)\circ \pi_1=d\pi_1(\nabla^*_{X_1^h}Y_1^h)
+(cf_2^v)^2H^{*f_1^h}(X_1^h,Y_1^h)(gradf_1)\circ \pi_1,&\\
       \end{array}
     \right.
\end{equation}
\begin{equation}\label{connection2,*}
\left\{
  \begin{array}{llll}
(\nabla{\hskip -.2cm^{^{^{2}}}}_{X_2}Y_2)\circ \pi_2=\frac{1}{(f_1^h)^2}d\pi_2(\nabla_{X_2^v}Y_2^v)\\
(\nabla^{^{*}}{\hskip -.36cm^{^{^{2}}}}_{X_2}Y_2)\circ \pi_2=\frac{1}{(f_1^h)^2}d\pi_2(\nabla^*_{X_2^v}Y_2^v).&\\
  \end{array}
\right.
\end{equation}
Therfore, we have for all $X_i,Y_i,Z_i\in\Gamma(TM_{_i})$ $(i=1,2)$.
   \begin{equation}\label{horizdual}
    X_i^I(\Tilde{g}_{_{f_1f_2}}(Y_i^I,Z_i^I))=\tilde{g}_{_{f_1f_2}}(\nabla_{X_i^I}Y_i^I,Z_i^I)+\tilde{g}_{_{f_1f_2}}(Y_i^I,\nabla^*_{X_i^I}Z_i^I).
   \end{equation}
Since, $d\pi{\hskip -0.1cm{_{_{_{_{3-i}}}}}}\!\!\!\!\!(Z_i^I)=0$, $X_i^I(f_{3-i}^{J})=0$
 and for any $X\in\Gamma(TM_1\times M_2)$,
$$
\tilde{g}_{f_1f_2}(X,Z_i^I)=
\left\{
  \begin{array}{ll}
  g_1^{\pi_1}(d\pi_1(X),Z_1\circ\pi_1)+(cf_2^v)^2X(f_1^h)Z_i(f_1)^h, & if (i,I)=(1,h) \\
   ( f_{1}^h)^2g_2^{\pi_2}(d\pi_2(X),Z_2\circ\pi_2), & (i,I)=(2,v)
  \end{array}
\right.
$$
Substituting from Equations (\ref{connection1,*}) and (\ref{connection2,*}) into
Formula (\ref{horizdual}) gives
$$
\left\{
  \begin{array}{ll}
    \left(X_1(g_1(Y_1,Z_1))\right)^h=g_1^{\pi_1}(\nabla{\hskip -.2cm^{^{^{1}}}}_{X_1}Y_1,Z_1\circ\pi_1)
+g_1^{\pi_1}(\nabla^{^{*}}{\hskip -.38cm^{^{^{1}}}}_{X_1}Z_1,Y_1\circ\pi_1), &  \\
    (f_1^h)^2 \left(X_2(g_2(Y_2,Z_2))\right)^v=(f_1^h)^2\left\{g_2^{\pi_2}(\nabla{\hskip -.2cm^{^{^{2}}}}_{X_2}Y_2,Z_2\circ\pi_2)
+g_2^{\pi_2}(\nabla^{^{*}}{\hskip -.38cm^{^{^{2}}}}_{X_2}Z_2,Y_2\circ\pi_2)\right\}, &
  \end{array}
\right.
$$
Hence, the pair of affine connections $\nabla{\hskip -0.15cm^{^{^{i}}}}$ and $\nabla^{^{*}}{\hskip -0.35cm^{^{^{i}}}}~$  ~are
conjugate with respect to $g_{i}$.
\end{proof}
\begin{proposition}
Let $(g_{i},\nabla{\hskip -0.15cm^{^{^{i}}}},\nabla^{^{*}}{\hskip -0.35cm^{^{^{i}}}}~)$ be a dualistic structure on
$M_{i}$ $(i=1,2)$. Then there exists a dualistic structure on $M_{_1}\times M_{_2}$ with respect to $\tilde{g}_{f_1f_2}$.
\end{proposition}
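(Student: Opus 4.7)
The plan is to mirror the construction used in the preceding proposition for $G_{f_1f_2}$, adapting each formula to the simpler structure of $\tilde{g}_{f_1f_2}$. I would first define the connections $\nabla$ and $\nabla^{*}$ on $M_1\times M_2$ by prescribing their values on horizontal and vertical lifts and then extending by $C^{\infty}(M_1\times M_2)$-bilinearity. The natural ansatz is obtained by inverting formulas (\ref{connection1,*}) and (\ref{connection2,*}): on pure horizontal lifts, $\nabla_{X_1^h}Y_1^h$ (resp.\ $\nabla^{*}_{X_1^h}Y_1^h$) should carry the lift $(\nabla{\hskip-0.2cm^{^{^{1}}}}_{X_1}Y_1)^h$ (resp.\ its dual) together with a Hessian-type correction along $(\mathrm{grad}\,f_1)^h$ with coefficient $-(cf_2^v)^2 H^{f_1^h}(X_1^h,Y_1^h)$, while on pure vertical lifts one sets $\nabla_{X_2^v}Y_2^v = (\nabla{\hskip-0.2cm^{^{^{2}}}}_{X_2}Y_2)^v$ (and likewise for $\nabla^{*}$), the scaling $(f_1^h)^2$ being already absorbed into the metric. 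The mixed-lift values $\nabla_{X_1^h}Y_2^v$ and $\nabla_{X_2^v}Y_1^h$ (with their duals) must be chosen symmetrically so as to cancel the terms produced by differentiating $(f_1^h)^2$.

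Once the connections are in place, I would introduce the $(0,3)$-tensor
\[
A(X,Y,Z) = X\bigl(\tilde{g}_{f_1f_2}(Y,Z)\bigr) - \tilde{g}_{f_1f_2}(\nabla_X Y, Z) - \tilde{g}_{f_1f_2}(Y, \nabla^{*}_X Z),
\]
which is $C^{\infty}(M_1\times M_2)$-linear in each slot. By Lemma~\ref{lift}, it then suffices to show $A(X_i^I, Y_j^J, Z_k^K)=0$ for each of the eight combinations with $(i,I),(j,J),(k,K)\in\{(1,h),(2,v)\}$. The four pure cases $A(X_i^I, Y_i^I, Z_i^I)$ should collapse, via (\ref{Other equivalent generalized warped}) and the assumed dualistic identity for $(g_i,\nabla{\hskip-0.2cm^{^{^{i}}}},\nabla^{*}{\hskip-0.36cm^{^{^{i}}}})$, to tensorial identities on the factors $M_i$. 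In the remaining mixed cases, the orthogonality $\tilde{g}_{f_1f_2}(X_1^h, Y_2^v)=0$ annihilates several terms from the outset, and what survives is expected to cancel directly from the definitions.

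The main obstacle I anticipate is the cross term $(cf_2^v)^2 X_1(f_1)^h Y_1(f_1)^h$ in $\tilde{g}_{f_1f_2}(X_1^h, Y_1^h)$: when differentiated by a vertical lift $X_2^v$, it hits the factor $(f_2^v)^2$ and produces a remainder depending on $X_2(f_2)$ that has no counterpart on $(M_1,g_1,\nabla{\hskip-0.2cm^{^{^{1}}}},\nabla^{*}{\hskip-0.36cm^{^{^{1}}}})$ alone. Matching this remainder forces a careful calibration of the Hessian correction inside $\nabla_{X_1^h}Y_1^h$ and $\nabla^{*}_{X_1^h}Y_1^h$, together with the mixed-lift values $\nabla_{X_2^v}Y_1^h$, so that the conjugacy slack appearing in $A(X_2^v, Y_1^h, Z_1^h)$ exactly cancels the symmetric slack in $A(X_1^h, Y_1^h, Z_2^v)$. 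Once these two interlocking cases close, the remaining verifications should reduce to routine expansions parallel to those in the preceding proposition.
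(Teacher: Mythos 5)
Your overall strategy is the paper's: define $\nabla$ and $\nabla^{*}$ on lifts, form the tensor $A(X,Y,Z)=X(\tilde{g}_{f_1f_2}(Y,Z))-\tilde{g}_{f_1f_2}(\nabla_XY,Z)-\tilde{g}_{f_1f_2}(Y,\nabla^{*}_XZ)$, and check the finitely many lift combinations. But the ansatz you commit to is not the one that works, and the errors are not mere calibration constants. First, setting $\nabla_{X_2^v}Y_2^v=(\nabla^{2}_{X_2}Y_2)^v$ with no correction (``the scaling $(f_1^h)^2$ being already absorbed into the metric'') fails: in the case $A(X_2^v,Y_2^v,Z_1^h)$ the first term vanishes by orthogonality, while the term $\tilde{g}_{f_1f_2}(Y_2^v,\nabla^{*}_{X_2^v}Z_1^h)$ picks up $f_1^hZ_1(f_1)^h g_2(X_2,Y_2)^v$ from the vertical part $(Z_1(\ln f_1))^hX_2^v$ of the mixed covariant derivative (which you cannot drop, since it is exactly what absorbs $X_1^h((f_1^h)^2)$ in the case $A(X_1^h,Y_2^v,Z_2^v)$). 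The paper cancels this with the extra term $-\frac{f_1^hg_2(X_2,Y_2)^v}{1+(cf_2^v)^2b_1^h}(gradf_1)^h$ in $\nabla_{X_2^v}Y_2^v$; without it, $A(X_2^v,Y_2^v,Z_1^h)\neq 0$. Second, your $\nabla_{X_1^h}Y_1^h$ has no component along $(gradf_2)^v$, yet the case $A(X_1^h,Y_1^h,Z_2^v)$ requires $\tilde{g}_{f_1f_2}(\nabla_{X_1^h}Y_1^h,Z_2^v)=-c^2f_2^vX_1(f_1)^hY_1(f_1)^hZ_2(f_2)^v$ to offset the contribution of $\nabla^{*}_{X_1^h}Z_2^v$; the paper supplies this via the term $-c^2f_2^v(X_1(\ln f_1)Y_1(\ln f_1))^h(gradf_2)^v$ in (\ref{Other dual generalized}). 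Third, the Hessian correction must enter with a plus sign and the normalization $\frac{1}{1+(cf_2^v)^2b_1^h}$, because $\tilde{g}_{f_1f_2}((gradf_1)^h,Z_1^h)=(1+(cf_2^v)^2b_1^h)Z_1(f_1)^h$; naively ``inverting'' (\ref{connection1,*}) gives the wrong sign and misses this factor, since those formulas only record $d\pi_1$ of the covariant derivative and the Hessian there is taken with respect to the product connection, not the base one.

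There is also a conceptual slip: you propose that the ``slack'' in $A(X_2^v,Y_1^h,Z_1^h)$ should \emph{cancel} the slack in $A(X_1^h,Y_1^h,Z_2^v)$. Conjugacy is the pointwise vanishing of the tensor $A$, so each of the eight lift cases must vanish separately; nothing can be traded between different argument slots. (The case $A(X_2^v,Y_1^h,Z_1^h)$ closes on its own because $X_2^v((cf_2^v)^2Y_1(f_1)^hZ_1(f_1)^h)=2c^2f_2^vX_2(f_2)^vY_1(f_1)^hZ_1(f_1)^h$ is matched by the two $(gradf_1)^h$-components of the mixed covariant derivatives, again thanks to the factor $1+(cf_2^v)^2b_1^h$.) A minor further point: a connection is not extended by $C^{\infty}$-bilinearity from its values on lifts; it is $C^{\infty}$-linear in the first slot and Leibniz in the second, and it is $A$, not $\nabla$, that is tensorial. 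To repair the proof you need to posit the full list (\ref{Other dual generalized}) — including the two correction terms you omitted and the normalizing denominators — and then the case-by-case verification does go through as you describe.
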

\begin{proof}
Let $\nabla$ and $\nabla^*$ be the connections on $M_1\times M_2$ given by
\begin{equation}  \label{Other dual generalized}
\left\{
  \begin{array}{rll}
    \nabla _{X_1^h}Y_1^{h}&=&(\nabla{\hskip -0.25cm^{^{^{1}}}}_{X_1}Y_1)^h
+\frac{(cf_2^v)^2 H^{f_1}(X_1,Y_1)^h}{1+(cf_2^v)^2b_1^h}(gradf_1)^h   \\
&&\\
& -&c^2f_2^v(X_1(\ln f_1)Y_1(\ln f_1))^h(gradf_2)^v,\\
&&\\
 \nabla _{X_2^v}Y_2^{vh}&=&(\nabla{\hskip -0.2cm^{^{^{2}}}}_{X_2}Y_2)^v
 -\frac{f_1^hg_2(X_2,Y_2)^v}{1+(cf_2^v)^2b_1^h}(gradf_1)^h,  \\
&&\\

 \nabla^* _{X_1^h}Y_1^{h}&=&(\nabla^*{\hskip -0.38cm^{^{^{1}}}}_{X_1}Y_1)^h
+\frac{(cf_2^v)^2 H{^{*}}^{f_1}(X_1,Y_1)^h}{1+(cf_2^v)^2b_1^h}(gradf_1)^h   \\
&&\\
& -&c^2f_2^v(X_1(\ln f_1)Y_1(\ln f_1))^h(gradf_2)^v,  \\
&&\\
 \nabla^* _{X_2^v}Y_2^{v}&=&(\nabla^*{\hskip -0.38cm^{^{^{2}}}}_{X_2}Y_2)^v
-\frac{f_1^hg_2(X_2,Y_2)^v}{1+(cf_2^v)^2b_1^h}(gradf_1)^h,   \\
&\\
\nabla_{X_1^h}Y_{2}^v&=&\nabla^*_{X_1^h}Y_{2}^v=\frac{c^2f_2^vY_2(f_2)^vX_1(f_1)^h}{1+(cf_2^v)^2b_1^h}(gradf_1)^h
 +\big(X_1(\ln f_{_{1}})\big)^hY_2^v,\\
&&\\

\nabla _{Y_2^v}X_{1}^h&=&\nabla^* _{Y_2^v}X_{1}^h=\nabla _{X_1^h}Y_{2}^v.

\end{array}
\right.
\end{equation}
for any $X_i,Y_i\in \Gamma(TM_i)$ $(i=1,2)$. Where $H^{f_1}$ and $H{^{*}}^{f_1}$ are the
Hessian of $f_{_1}$ with respect to $\nabla{\hskip -0.2cm^{^{^{1}}}}$ and
$\nabla^{^{*}}{\hskip -0.36cm^{^{^{1}}}}$ respectively.\\
Let us assume that $(g_{i},\nabla{\hskip -0.17cm^{^{^{i}}}},\nabla^{^{*}}{\hskip -0.33cm^{^{^{i}}}}~)$ is
a dualistic structures on $M_{_i}$, $i=1,2$. Let $A$ be a tensor field of type $(0,3)$ defined
for any $X,Y, Z\in \Gamma(TM_1\times M_2)$ by
$$
A(X,Y,Z)=X(\tilde{g}_{f_1f_2}(Y,Z))-\tilde{g}_{f_1f_2}(\nabla_XY,Z)-\tilde{g}_{f_1f_2}(Y,\nabla^{^{*}}_XZ),
$$
Since $d\pi_{3-i}(X_i^I)=0$, it follows that
$$
X_i^I(f_{3-i}^J)=d\pi_{3-i}(X_i^I)(f_{3-i})=0, ~~\hskip 0.4cm
\forall (i,I),(j,J)\in\{(i,h),(2,v)\},
$$
and hence, for all $X_i,Y_i, Z_i\in \Gamma(TM_i)$ $(i=1,2)$, we have
$$
\left\{
  \begin{array}{ll}
    X_1^h\left(\tilde{g}_{f_1f_2}(Y_1^h,Z_1^h)\right)=\left(X_1(g_1(Y_1,Z_1))\right)^h
+(cf_2^v)^2\left\{Y_1(f_1)X_1(Z_1(f_1))+Z_1(f_1)X_1(Y_1(f_1))\right\}^h, & \\
     X_2^v\left(\tilde{g}_{f_1f_2}(Y_2^v,Z_2^v)\right)=(cf_2^v)^2\left(X_2(g_2(Y_2,Z_2))\right)^h. &
  \end{array}
\right.
$$
as $(g_{i},\nabla{\hskip -0.17cm^{^{^{i}}}},\nabla^*{\hskip -0.33cm^{^{^{i}}}}~)$
 is dualistic structure and from Equations (\ref{Other equivalent generalized warped}), (\ref{Other dual generalized}),
then it's easily seen that the following equation holds
$$
A(X_i^I,Y_i^I, Z_i^I)=0, \hskip 0.5cm \forall (i,I),(j,J)\in\{(i,h),(2,v)\}.
$$
In the different lifts $(i\neq j)$, we have
$$
X_{i}^I(\tilde{g}_{_{f_1f_2}}(Y_i^I,Z_j^J))=0,
$$
$$
\left\{
  \begin{array}{ll}
   \tilde{g}_{_{f_1f_2}}(\nabla_{X_{1}^h}Y_1^h,Z_2^v)=-c^2f_2^vX_1(f_1)^hY_1(f_1)^hZ_2(f_2)^v, &  \\
    \tilde{g}_{_{f_1f_2}}(\nabla_{X_{2}^v}Y_2^v,Z_1^h)=-f_1^hg_2(X_2,Y_2)^vZ_1(f_1)^h, &
  \end{array}
\right.
$$
and
$$
\left\{
  \begin{array}{ll}
    \tilde{g}_{_{f_1f_2}}(Y_1^h,\nabla^{^{*}}_{X_{1}^h}Z_2^v)=c^2f_2^vX_1(f_1)^hY_1(f_1)^hZ_2(f_2)^v, &  \\
     \tilde{g}_{_{f_1f_2}}(Y_2^v,\nabla^{^{*}}_{X_{2}^v}Z_1^h)=f_1^hg_2(X_2,Y_2)^vZ_1(f_1)^h, &
  \end{array}
\right.
$$
We add these equations and obtain
$$
A(X_{i}^I,Y_i^I, Z_j^J)=0,  \hskip 0.5cm \forall (i,I),(j,J)\in\{(i,h),(2,v)\}.
$$
Hence the same applies for $A(X_j^J,Y_i^I, Z_{i}^I)=A(X_{i}^I,Y_j^J, Z_i^I)=0$.   \\
This proves that $\nabla^{^{*}}$ is conjugate to $\nabla$ with respect to $\tilde{g}_{_{f_1f_2}}.$

\end{proof}

We recall that the connection $\nabla$ on $M_{_1}\times M_{_2}$ induced by $\nabla{\hskip -0.19cm^{^{^{1}}}}$ and
 $\nabla{\hskip -0.19cm^{^{^{2}}}}$ on $M_{_1}$ and $M_{_2}$ respectively, is given by Equation
(\ref{Other dual generalized}).

\begin{proposition}
$(M_{_1},\nabla{\hskip -0.19cm^{^{^{1}}}},g_1)$ and $(M_{_2},\nabla{\hskip -0.19cm^{^{^{2}}}},g_2)$ are
statistical manifolds if and only if $(M_{_1}\times M_{_2},\tilde{g}_{_{f_1f_2}},\nabla)$ is a statistical manifold.
\end{proposition}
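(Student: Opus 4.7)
The plan is to follow exactly the same two-step template used in the proof of the analogous Proposition for $G_{f_1f_2}$: first show that $\nabla$ is torsion-free if and only if both $\nabla^{1}$ and $\nabla^{2}$ are, and second show that $\nabla\tilde{g}_{f_1f_2}$ is totally symmetric if and only if each $\nabla^{i}g_i$ is. In both directions all the work is to be done on pairs/triples of lifted vector fields $X_i^I\in\mathfrak{L}(M_i)$, since these span $\Gamma(T(M_1\times M_2))$ pointwise and the tensorial character of $T$ and $\nabla\tilde{g}_{f_1f_2}$ lets us argue by $C^\infty(M_1\times M_2)$-bilinearity/trilinearity.

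For the torsion-free check (forward direction), I would plug lifted vector fields into the defining equations (\ref{Other dual generalized}). When $X,Y$ are both horizontal lifts, the term $(\nabla^{1}_{X_1}Y_1)^h$ is symmetrized against $(\nabla^{1}_{Y_1}X_1)^h$ to $[X_1,Y_1]^h=[X_1^h,Y_1^h]$ by the torsion-freeness of $\nabla^{1}$; the correction terms involve the Hessian $H^{f_1}(X_1,Y_1)$ and $X_1(\ln f_1)Y_1(\ln f_1)$, both symmetric in $X_1,Y_1$, so they cancel on antisymmetrization. The vertical-vertical case is analogous, using $\nabla^{2}$ torsion-freeness together with the symmetry of $g_2(X_2,Y_2)$. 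Finally the mixed case is trivial: the last two lines of (\ref{Other dual generalized}) assert directly $\nabla_{X_1^h}Y_2^v=\nabla_{Y_2^v}X_1^h$, and $[X_1^h,Y_2^v]=0$ by Lemma \ref{lift}, so $T(X_1^h,Y_2^v)=0$.

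For the symmetry of $\nabla\tilde{g}_{f_1f_2}$, I compute $A(X,Y,Z):=X(\tilde{g}_{f_1f_2}(Y,Z))-\tilde{g}_{f_1f_2}(\nabla_XY,Z)-\tilde{g}_{f_1f_2}(Y,\nabla_XZ)$ (with $\nabla$ in both slots, since here $\nabla=\nabla^*$ is irrelevant to the statistical condition) on lifted triples, and check that $A(X,Y,Z)=A(Y,X,Z)$. On a purely horizontal triple, I expand via (\ref{Other equivalent generalized warped}) and (\ref{Other dual generalized}); the key simplification is that the vertical piece $-c^2 f_2^v(X_1(\ln f_1)Y_1(\ln f_1))^h(\mathrm{grad}\, f_2)^v$ of $\nabla_{X_1^h}Y_1^h$ is annihilated by $\tilde{g}_{f_1f_2}(-,Z_1^h)$, so only the $(\nabla^{1}_{X_1}Y_1)^h$ part and the $(\mathrm{grad}\,f_1)^h$ correction survive, and the result matches the analogous proof for $G_{f_1f_2}$ after accounting for the $(cf_2^v)^2 X(f_1^h)Y(f_1^h)$ deformation of $\tilde{g}_{f_1f_2}$. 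The vertical-vertical-vertical case reduces cleanly to $(f_1^h)^2(\nabla^{2}g_2)(X_2,Y_2,Z_2)^v$. The mixed cases I would handle by direct case analysis, verifying vanishing of $A$ using $d\pi_{3-i}(X_i^I)=0$ and $X_i^I(f_{3-i}^{J})=0$.

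For the converse, I specialize the hypothesis to horizontal (resp. vertical) lifts: the symmetry $A(X_1^h,Y_1^h,Z_1^h)=A(Y_1^h,X_1^h,Z_1^h)$ together with the projection formula (\ref{Other equivalent generalized warped}) forces $(\nabla^{1}g_1)(X_1,Y_1,Z_1)=(\nabla^{1}g_1)(Y_1,X_1,Z_1)$, and $T(X_1^h,Y_1^h)=0$ projects to the torsion of $\nabla^{1}$ vanishing; similarly for $\nabla^{2}$. The main obstacle I anticipate is bookkeeping the $(cf_2^v)^2 df_1^h\odot df_1^h$ cross-term in $\tilde{g}_{f_1f_2}$ in the all-horizontal computation of $A$: I must verify that the extra Hessian-type terms in $\nabla_{X_1^h}Y_1^h$ are precisely what is needed to cancel the derivatives of $(cf_2^v)^2 X(f_1)Y(f_1)$ so that only the $g_1$-part of $\tilde{g}_{f_1f_2}$ contributes to the symmetry condition. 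This is exactly why the formulas in (\ref{Other dual generalized}) involve $H^{f_1}$ and $(1+(cf_2^v)^2 b_1^h)$, so the algebra should balance; the routine verification follows the same pattern as the proof for $G_{f_1f_2}$ earlier in the paper.
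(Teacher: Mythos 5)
Your proposal is correct and follows essentially the same two-step template as the paper's own proof: verify torsion-freeness of $\nabla$ and symmetry of $\nabla\tilde{g}_{_{f_1f_2}}$ on lifted fields using Equations (\ref{Other equivalent generalized warped}) and (\ref{Other dual generalized}), then obtain the converse by restricting to horizontal and vertical lifts. The only (cosmetic) difference is that you check the torsion directly on lifts with an explicit symmetry-cancellation of the Hessian and gradient correction terms, whereas the paper projects the torsion through $d\pi_i$ and invokes Remark \ref{rem dpi}; both amount to the same computation.
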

\begin{proof}
    Let us assume that $(M_{_i},\nabla{\hskip -0.17cm^{^{^{i}}}},g_{_{_i}})$ $(i=1,2)$ is statistical manifold.\\
    Firstly, we show that $\nabla$ is torsion-free. Indeed; by Equation (\ref{Other dual generalized}), we have
for any $X,Y\in\Gamma(TM_1\times M_2)$
$$
        d\pi_i(T(X,Y)) =  \nabla{\hskip-0.4cm^{^{^{\pi_i}}}}_{X}d\pi_i(Y)
-\nabla{\hskip-0.4cm^{^{^{\pi_i}}}}_{Y}d\pi_i(X)-d\pi_i([X,Y])
$$
Since for $i=1,2$, $\nabla{\hskip -0.17cm^{^{^{i}}}}$ is torsion-free, then
$$
\nabla{\hskip-0.4cm^{^{^{\pi_i}}}}_{X}d\pi_i(Y)
-\nabla{\hskip-0.4cm^{^{^{\pi_i}}}}_{Y}d\pi_i(X)=d\pi_i([X,Y])
$$
   Therefore, from Remark \ref{rem dpi}, the connection $\nabla$ is torsion-free.\\
Secondly, we show that $\nabla G_{f_1,f_2}$ is symmetric. In fact; for $(i,I)\in\{(i,h),(2,v)\}$,
$$
(\nabla \tilde{g}_{_{f_1f_2}})(X_i^I,Y_i^I,Z_i^I)=X_i^I(\tilde{g}_{_{f_1f_2}}(Y_i^I,Z_i^I))-\tilde{g}_{_{f_1f_2}}(\nabla_{X_i^I}Y_i^I,Z_i^I)
-\tilde{g}_{_{f_1f_2}}(Y_i^I,\nabla_{X_i^I}Z_i^I)
$$
by Equations (\ref{Other equivalent generalized warped}), (\ref{Other dual generalized}) and since
$(\nabla{\hskip -0.17cm^{^{^{i}}}}g_i)$, $i=1,2$, is symmetric, we have
$$
(\nabla \tilde{g}_{_{f_1f_2}})(X_i^I,Y_i^I,Z_i^I)=(\nabla \tilde{g}_{_{f_1f_2}})(Y_i^I,X_i^I,Z_i^I).
$$
In the different lifts, for all $(i,I),(j,J)\in\{(i,h),(2,v)\}$, we have
$$
(\nabla \tilde{g}_{_{f_1f_2}})(X_i^I,Y_i^I, Z_{3-i}^J)=(\nabla \tilde{g}_{_{f_1f_2}})(X_{3-i}^J,Y_i^I, Z_i^I)
=(\nabla \tilde{g}_{_{f_1f_2}})(X_i^I,Y_{3-i}^J, Z_{i}^I)=0.
$$
Therefore, $(\nabla \tilde{g}_{_{f_1f_2}})$ is symmetric.
Thus $(M_{_1}\times M_{_2},\tilde{g}_{_{f_1f_2}},\nabla)$ is a statistical manifold.\\

Conversely, if $(M_{_1}\times M_{_2},\tilde{g}_{_{f_1f_2}},\nabla)$ is statistical manifold,
then $(\nabla \tilde{g}_{_{f_1f_2}})$ is symmetric and $\nabla$ is torsion-free, particularly,
when $X_i, Y_i, Z_i\in \Gamma(TM_i)$, we have
$$
\left\{
    \begin{array}{rll}
     (\nabla \tilde{g}_{_{f_1f_2}})(X_i^I,Y_I^I,Z_i^I)&=(\nabla \tilde{g}_{_{f_1f_2}})(Y_i^I,X_I^I,Z_i^I),& \\
       & &\forall~i=1,2,\\
      T(X_i^I,Y_i^I)&=0.&
    \end{array}
  \right.
$$
Then, by Equations (\ref{Other equivalent generalized warped}) and (\ref{Other dual generalized}), we
obtained, for $i=1,2$, $\nabla{\hskip -0.17cm^{^{^{i}}}}g_i$, is symmetric and $\nabla{\hskip -0.17cm^{^{^{i}}}}$,
is torsion-free. Therefore, $(M_i,\nabla{\hskip -0.17cm^{^{^{i}}}},g_i)$, $i=1,2$, is statistical manifold.
\end{proof}
At first, note that $(M_1\times M_2, \tilde{g}_{_{f_1f_2}}, \nabla)$ is the statistical manifold
induced from $(M_1, g_1,\nabla{\hskip -.21cm^{^{^{1}}}})$ and $(M_2, g_2,\nabla{\hskip -.21cm^{^{^{2}}}})$.\\
Now, let $(M_{_1},\nabla{\hskip -0.21cm^{^{^{1}}}},g_1)$ and
$(M_{_2},\nabla{\hskip -0.21cm^{^{^{2}}}},g_2)$ be two statistical
manifolds and let $\mathcal{R}{\hskip -0.2cm^{^{^{1}}}}$,
$\mathcal{R}{\hskip -0.2cm^{^{^{2}}}}$ and $\mathcal{R}$ be the curvature
tensors with respect to $\nabla{\hskip -0.21cm^{^{^{1}}}}$,
$\nabla{\hskip -0.21cm^{^{^{2}}}}$ and $\nabla$ respectively.
\begin{proposition}\label{Other curvature generalized}
Let $(M_{_i},\nabla{\hskip -0.21cm^{^{^{i}}}},\nabla^{^{*}}{\hskip -0.38cm^{^{^{i}}}}
,g_{_{_i}})$, $(i=1,2)$ be a connected statistical manifold.
Assume that the gradient of $f_i$ is parallel with respect to
$\nabla{\hskip -0.21cm^{^{^{i}}}}$ and $\nabla^{^{*}}{\hskip -0.38cm^{^{^{1}}}}$ $(i=1,2)$.
Then for any ${X_i},{Y_i},{Z_i}\in\Gamma(TM_{_i})$
$(i=1,2)$ we have
$$
\begin{array}{ll}
1.~\mathcal{R}({X_1}^h,{Y_1}^h){Z_1}^h&\!\!\!\!=(\mathcal{R}^{^1}({X_1},{Y_1}){Z_1})^h,\\
&\\
2.~\mathcal{R}({X_2}^v,{Y_2}^v){Z_2}^v&\!\!\!\!=(\mathcal{R}^{^2}({X_2},{Y_2}){Z_2})^v
-\frac{b_1}{1+(cf_2^v)^2b_1}\left\{(X_2\wedge_{g_2} Y_2)Z_2\right\}^v\\
&\\
&\!\!\!\!+\frac{c^2f_1^hf_2^vb_1}{\left(1+(cf_2^v)^2b_1\right)^2}
\left\{\left((X_2\wedge_{g_2} Y_2)Z_2\right)(f_2)\right\}^v(gradf_1)^h,\\
&\\
3.~\mathcal{R}({X_1}^h,{Y_1}^h){Z_2}^v&\!\!\!\!=0,\\
&\\

4.~\mathcal{R}({X_1}^h,{Y_2}^v){Z_1}^h&\!\!\!\!= \frac{c^2X_1(\ln f_1)^hZ_1(\ln f_1)^h Y_2(f_2)^v}{1+(cf_2^v)^2b_1}(gradf_2)^v,\\
\end{array}
$$
where the wedge product $(X_2\wedge_{g_2} Y_2)Z_2=g_2(Y_2,Z_2)X_2-g_2(X_2,Z_2)Y_2$.
\end{proposition}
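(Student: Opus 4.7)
The plan is to apply the definition
\begin{equation*}
\mathcal{R}(U,V)W = \nabla_U\nabla_V W - \nabla_V\nabla_U W - \nabla_{[U,V]}W
\end{equation*}
in each of the four cases, substituting the connection formulas (\ref{Other dual generalized}), using the parallelism of $gradf_i$ to suppress Hessian contributions, and invoking the bracket identities from Lemma \ref{lift}.

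I would begin with two preparatory simplifications that will be used throughout. From $\nabla^{1}gradf_1=\nabla^{*1}gradf_1=0$ together with the duality relation, one obtains $H^{f_1}(X_1,Y_1)=H^{*f_1}(X_1,Y_1)=0$, and similarly for $f_2$; this collapses the first two lines of (\ref{Other dual generalized}) to, for instance,
\begin{equation*}
\nabla_{X_1^h}Y_1^h = (\nabla^{1}_{X_1}Y_1)^h - c^2 f_2^v(X_1(\ln f_1)Y_1(\ln f_1))^h (gradf_2)^v.
\end{equation*}
Secondly, the mixed line of (\ref{Other dual generalized}), evaluated on $Y_2=gradf_2$ (or read symmetrically for $X_2^v$ contracted against $(gradf_1)^h$), provides closed-form expressions for $\nabla_{X_1^h}(gradf_2)^v$ and $\nabla_{X_2^v}(gradf_1)^h$ in terms only of $(gradf_1)^h$ and $(gradf_2)^v$; these are the only non-trivial covariant derivatives of lifted gradients that arise.

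Cases 1, 3, and 4 then fall quickly to antisymmetrization. For case 1, iterating the simplified horizontal formula and using $[X_1^h,Y_1^h]=[X_1,Y_1]^h$, the horizontal pieces of $\nabla_{X_1^h}\nabla_{Y_1^h}Z_1^h-\nabla_{[X_1,Y_1]^h}Z_1^h$ reconstruct $(\mathcal{R}^{1}(X_1,Y_1)Z_1)^h$ exactly, while the remaining vertical terms in $(gradf_2)^v$ are manifestly symmetric in $(X_1,Y_1)$ and cancel upon antisymmetrization. For case 3, expanding $\nabla_{X_1^h}\nabla_{Y_1^h}Z_2^v$ via the horizontal and mixed formulas produces only terms of the form $X_1(\cdot)^h Y_1(\cdot)^h(\cdots)$, all symmetric in $(X_1,Y_1)$ once the Hessians vanish, so the curvature is zero. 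Case 4 follows from $[X_1^h,Y_2^v]=0$ by expanding $\nabla_{X_1^h}\nabla_{Y_2^v}Z_1^h-\nabla_{Y_2^v}\nabla_{X_1^h}Z_1^h$ through the mixed formula and collecting the unique surviving $(gradf_2)^v$-coefficient.

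The genuine obstacle is case 2. Starting from
\begin{equation*}
\nabla_{Y_2^v}Z_2^v = (\nabla^{2}_{Y_2}Z_2)^v - \frac{f_1^h\, g_2(Y_2,Z_2)^v}{1+(cf_2^v)^2 b_1^h}(gradf_1)^h,
\end{equation*}
one must apply $\nabla_{X_2^v}$ and track three simultaneous effects: the $\nabla^{2}$-iteration, the derivative of the scalar coefficient $\alpha:=(1+(cf_2^v)^2 b_1^h)^{-1}$, which contributes an $\alpha^2$-factor coming from $X_2^v((cf_2^v)^2)$, and $\nabla_{X_2^v}(gradf_1)^h$ computed from the mixed formula (using $\nabla^{1}gradf_1=0$), which itself splits into a $(gradf_1)^h$-piece and a vertical $X_2^v$-piece. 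After antisymmetrizing in $(X_2,Y_2)$ and subtracting $\nabla_{[X_2,Y_2]^v}Z_2^v$, the $\nabla^{2}$-parts assemble into $(\mathcal{R}^{2}(X_2,Y_2)Z_2)^v$, the surviving $g_2$-coefficients combine into the wedge product $(X_2\wedge_{g_2}Y_2)Z_2$ with the factor $-b_1/(1+(cf_2^v)^2b_1)$, and the cross term between $(gradf_1)^h$ and $(gradf_2)^v$ yields the last $(gradf_1)^h$-valued term. Verifying that the coefficient of this cross term is exactly $c^2 f_1^h f_2^v b_1/(1+(cf_2^v)^2b_1)^2$ requires combining several rational expressions over the common denominator $(1+(cf_2^v)^2b_1)^2$, and that algebraic bookkeeping is the most delicate step of the proof.
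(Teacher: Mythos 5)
Your overall strategy --- direct expansion of $\mathcal{R}(U,V)W=\nabla_U\nabla_VW-\nabla_V\nabla_UW-\nabla_{[U,V]}W$ from the connection formulas (\ref{Other dual generalized}), with the parallelism hypothesis used to kill the Hessians --- is exactly what the paper intends; its own proof is a one-line appeal to ``long but straightforward calculations,'' so your sketch is strictly more informative. Your preparatory observations are correct and essential: $\nabla^{1}gradf_1=\nabla^{*1}gradf_1=0$ together with the conjugacy relation does force $H^{f_1}=H^{*f_1}=0$, and it also forces $b_1=g_1(gradf_1,gradf_1)$ to be constant, which you use tacitly when differentiating the scalar coefficients. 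Cases 1, 3 and 4 go through as you describe, with one caveat: in case 1 the residual $(gradf_2)^v$-terms are \emph{not} manifestly symmetric in $(X_1,Y_1)$; their cancellation requires the bracket term $\nabla_{[X_1,Y_1]^h}Z_1^h$ together with a second use of $H^{f_1}=0$ in the form $H^{\ln f_1}(Y_1,Z_1)=-Y_1(\ln f_1)Z_1(\ln f_1)$.

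The one substantive omission is in case 2. The coefficient of $(gradf_1)^h$ in $\nabla_{Y_2^v}Z_2^v$ is $f_1^h\,g_2(Y_2,Z_2)^v\,\alpha$, and applying $\nabla_{X_2^v}$ produces a \emph{fourth} effect absent from your list of three: the derivative $X_2^v\bigl(g_2(Y_2,Z_2)^v\bigr)$. Since $\nabla^{2}$ is not metric, this must be expanded via the conjugacy relation as $g_2(\nabla^{2}_{X_2}Y_2,Z_2)+g_2(Y_2,\nabla^{*2}_{X_2}Z_2)$; after antisymmetrizing in $(X_2,Y_2)$ and subtracting the bracket term, torsion-freeness removes part of it, but one is still left with $g_2\bigl(X_2,(\nabla^{*2}-\nabla^{2})_{Y_2}Z_2\bigr)-g_2\bigl(Y_2,(\nabla^{*2}-\nabla^{2})_{X_2}Z_2\bigr)$, which vanishes only because $\nabla^{2}g_2$ is totally symmetric --- i.e.\ only because $(M_2,\nabla^{2},g_2)$ is statistical. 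This is precisely the ingredient the paper's laconic proof is pointing at when it says the computation ``uses the fact that connections are compatible with the metric'' and that the connections are ``symmetrical''; without it the stated formula for $\mathcal{R}(X_2^v,Y_2^v)Z_2^v$ acquires a spurious $(gradf_1)^h$-valued term. Once that term is disposed of, your remaining bookkeeping (the $\alpha^2$ factor, the splitting of $\nabla_{X_2^v}(gradf_1)^h$ into its $(gradf_1)^h$- and $X_2^v$-pieces, and the reduction over the common denominator $(1+(cf_2^v)^2b_1)^2$) does yield the stated coefficients.
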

\begin{proof}
Long but straightforward calculations as in proof of the proposal (2), where it uses the fact that connections are compatible with the metric.
We obtained the same results as in (2), knowing we use only the connections are symmetrical.
\end{proof}
\begin{corollary}
Let $(M_{_i},\nabla{\hskip -0.21cm^{^{^{i}}}},\nabla^{^{*}}{\hskip -0.38cm^{^{^{1}}}}
,g_{_{_i}})$, $(i=1,2)$ be a connected statistical manifold. Assume that $f_1$ is a non-constant
positive function and $c\neq 0$.\\
If $(\nabla, \nabla^{^{*}}, \tilde{g}_{_{f_1f_2}})$ is a dually flat structure then
$(\nabla{\hskip -0.21cm^{^{^{1}}}},\nabla^{^{*}}{\hskip -0.38cm^{^{^{1}}}}, g_1~)$
 is also dually flat and  $(\nabla{\hskip -0.21cm^{^{^{2}}}},\nabla^{^{*}}
{\hskip -0.38cm^{^{^{2}}}}, g_2~)$ has a constant sectional curvature.
\end{corollary}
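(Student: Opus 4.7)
The plan is to start from $\mathcal{R}\equiv 0$ (and $\mathcal{R}^{*}\equiv 0$, which follows from dual flatness) and extract information by examining in turn the four curvature identities in Proposition~\ref{Other curvature generalized}.

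From identity~(1), $(\mathcal{R}^{1}(X_{1},Y_{1})Z_{1})^{h}=0$ for every $X_{1},Y_{1},Z_{1}\in\Gamma(TM_{1})$, and since the horizontal lift is injective on $\Gamma(TM_{1})$ this forces $\mathcal{R}^{1}\equiv 0$. The same reasoning applied to $\nabla^{*}$ gives $\mathcal{R}^{*1}\equiv 0$, so $(g_{1},\nabla^{1},\nabla^{*1})$ is dually flat, which settles the first assertion.

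Next, I exploit identity~(4) to force $f_{2}$ to be constant. Since $M_{1}$ is connected and $f_{1}$ non-constant, there exists $p_{1}\in M_{1}$ with $gradf_{1}\neq 0$; at that point choosing $X_{1}=Z_{1}=gradf_{1}$ gives $X_{1}(\ln f_{1})\,Z_{1}(\ln f_{1})\neq 0$. If there were $p_{2}\in M_{2}$ with $gradf_{2}\neq 0$, setting $Y_{2}=gradf_{2}$ would make both $Y_{2}(f_{2})$ and $(gradf_{2})^{v}$ nonzero at $(p_{1},p_{2})$. In the Riemannian setting $1+(cf_{2}^{v})^{2}b_{1}^{h}\geq 1>0$, so identity~(4) would evaluate to a nonzero vector there, contradicting $\mathcal{R}=0$. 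Hence $gradf_{2}\equiv 0$, i.e.\ $f_{2}$ is constant on $M_{2}$.

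With $f_{2}$ constant, the horizontal summand in identity~(2) drops out (because $((X_{2}\wedge_{g_{2}}Y_{2})Z_{2})(f_{2})=0$), and what remains is
$$
\mathcal{R}^{2}(X_{2},Y_{2})Z_{2}\;=\;\frac{b_{1}(p_{1})}{1+(cf_{2})^{2}b_{1}(p_{1})}\,(X_{2}\wedge_{g_{2}}Y_{2})Z_{2}
$$
at every pair $(p_{1},p_{2})$. The left-hand side depends only on $p_{2}$ while the coefficient depends on $p_{1}$ only through $b_{1}$, so separation of variables forces $b_{1}$ to be constant on $M_{1}$; the coefficient then reduces to a single constant $K$, giving $\mathcal{R}^{2}(X_{2},Y_{2})Z_{2}=K\,(X_{2}\wedge_{g_{2}}Y_{2})Z_{2}$, which is precisely constant sectional curvature $K$ for $(M_{2},g_{2},\nabla^{2})$. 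Running the argument for the conjugates delivers the full statement.

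The main obstacle is Step~2: one must pick the base points and vector fields so that the two scalar numerator factors in identity~(4) are simultaneously nonzero, otherwise that identity is satisfied for trivial reasons and no information on $f_{2}$ can be extracted. Once $f_{2}$ has been pinned down to be constant, Step~3 is a routine separation-of-variables argument inside identity~(2).
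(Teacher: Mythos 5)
Your proposal is correct and follows essentially the same route as the paper: item (1) of the proposition gives dual flatness of $(g_1,\nabla^{1},\nabla^{*1})$, item (4) forces $f_2$ to be constant, and item (2) then yields constant sectional curvature for the fiber. The only (harmless) deviations are that you obtain the vanishing of the dual curvature by rerunning the argument for $\nabla^{*}$ instead of invoking the duality identity $g(\mathcal{R}(X,Y)Z,W)+g(Z,\mathcal{R}^{*}(X,Y)W)=0$ as the paper does, and that you derive the constancy of $b_1$ by separation of variables, whereas the paper takes it for granted from the parallel-gradient hypothesis of the proposition.
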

\begin{proof}
Let $(\nabla, \nabla^{^{*}},\tilde{g}_{_{f_1f_2}})$ be a dually flat structure.\\
By1. of Proposition \ref{Other curvature generalized}, for any $X_1,Y_1,Z_1\in\Gamma(TM_{_1})$, we have
$$
\mathcal{R}{\hskip -0.2cm^{^{^{1}}}}(X_1,Y_1)Z_1=0,
$$
From Equation (\ref{R et R*}), Since $(M_{_1},\nabla{\hskip -0.21cm^{^{^{1}}}},g_1)$
$(i=1,2)$ is a statistical manifold, we have
$$
\mathcal{R}^{^{*}}{\hskip -0.2cm^{^{^{1}}}}(X_1,Y_1)Z_1=0.
$$
Hence $(M_{_1},\nabla{\hskip -0.21cm^{^{^{1}}}},\nabla^{^{*}}{\hskip -0.38cm^{^{^{1}}}},g_1)$ is dually flat.\\
By 4. of Proposition \ref{Other curvature generalized}, for any $X_1,Z_1\in\Gamma(TM_{_1})$ and
$Y_2\in\Gamma(TM_{_2})$ , we have
$$
\frac{c^2X_1(\ln f_1)^hZ_1(\ln f_1)^h Y_2(f_2)^v}{1+(cf_2^v)^2b_1}(gradf_2)^v=0.
$$
So $f_2$ is a constant function since $f_1$ is non-constant function and
$M_2$ is assumed to be connected.\\
Moreover, By 2. of Proposition \ref{Other curvature generalized}, for any
$X_2,Y_2,Z_2\in\Gamma(TM_{_2})$, we have
$$
\mathcal{R}{\hskip -0.2cm^{^{^{2}}}}(X_2,Y_2)Z_2=\frac{b_1}{1+(cf_2^v)^2b_1}\left\{(X_2\wedge_{g_2} Y_2)Z_2\right\}^v,
$$
Since $b_1$ and $f_2$ are constants, it follows from the previous equality that $(\nabla{\hskip -0.21cm^{^{^{2}}}},\nabla^{^{*}}
{\hskip -0.38cm^{^{^{2}}}}, g_2~)$ has a constant sectional curvature $\frac{b_1}{1+(cf_2^v)^2b_1}$.
\end{proof}
\textbf{Acknowledgement:} A big part of this work was done at The Raphel Salem Laboratory of mathematics,
University of Rouen (France), Rafik Nasri and Djelloul Djebbouri would like to thank Raynaud de Fitte Paul,
Siman Raulot for very useful discussions and the Mathematic section for their hospitality.


\medskip
Received xxxx 20xx; revised xxxx 20xx.
\medskip

\end{document}